\newcommand{\bR}{\mathbb{R}}
\newcommand{\bC}{\mathbb{C}}
\newcommand{\bN}{\mathbb{N}}
\newcommand{\bZ}{\mathbb{Z}}
\newcommand{\bK}{\mathbb{K}}
\newcommand{\scalar}[2]{\langle #1 , #2 \rangle}
\newtheorem{Tw}{Theorem}[section]
\newtheorem{Wn}[Tw]{Corollary}
\newtheorem{Uw}[Tw]{Remark}
\newtheorem{lem}[Tw]{Lemma}
\title{Loose edges}
\author{Janusz Gwo\'zdziewicz and Beata Hejmej}
\begin{document}
\maketitle
\begin{abstract}
We consider formal power series in several variables with coefficients in arbitrary field
such that their Newton polyhedron has a loose edge. We show that if the symbolic restriction of 
the power series $f$ to such an edge is a product of two coprime polynomials, then $f$ 
factorizes in the ring of power series.  
\end{abstract}

\section{Introduction}

\textbf{Notation.} 
We denote by $\bR_{\geq0}$ (respectively $\bR_{>0}$) 
the set of nonnegative (respectively positive) real numbers. 
The symbol $\scalar{\cdot}{\cdot}$ denotes the standard scalar product.
We use a multi-index notation 
$\underline x^{\alpha}:=x_1^{\alpha_1}\cdots x_n^{\alpha_n}$ 
for $\alpha=(\alpha_1,\dots,\alpha_n)$. 

\medskip
We start from a quick reminder of convex geometry. 

Let $f\in \bK[\![x_1,\dots,x_n]\!]$,  $f=\sum a_{\alpha}\underline x^{\alpha}$ be a nonzero power series.
We define the {\em Newton polyhedron} $\Delta(f)$ as the convex hull of the set 
$\{ \alpha: a_{\alpha}\neq 0\} + \bR_{\geq 0}^n$.

The \textit{symbolic restriction} of $f$ to $A\subset \Delta(f)$ is defined 
as the power series
$$  f|_A=\sum_{\alpha\in A} a_{\alpha}\underline x^{\alpha}. $$

Given $\Delta=\Delta(f)$, for any $\xi\in\bR_{\geq 0}^n$ we call 
the set $$\Delta^{\xi} := \{\, a\in \Delta: \scalar{\xi}{a}=\min_{b\in \Delta} \scalar{\xi}{b}\,\}$$ 
a {\em face} of $\Delta$. 
A Newton polyhedron has a finite number of faces. 
A face $\Delta^{\xi}$ is compact if and only if $\xi\in \bR_{>0}^n$. 
A face of dimension 0  (respectively 1) is called  a {\em vertex} (respectively an {\em edge}).
Following \cite{lipkovski1988newton}, we call a  compact edge of a Newton polyhedron a {\it loose edge} 
if it is not contained in any compact face of dimension~$\geq 2$. 

Several Newton polyhedra are drawn in the pictures that follow. The segments marked in blue are loose edges

 \begin{center}
\begin{tikzpicture}[scale=1.7]
\draw [->](0,0,0) -- (1.2,0,0); \draw[->](0,0,0) -- (0,1.1,0) ; 
\draw[->](0,0,0) -- (0,0,1.5);
\draw[very thick] (0,0.7,0) -- (0,0,0.7);
\draw[very thick] (0.7,0,0) -- (0,0,0.7);
\draw[very thick] (0,0.7,0) -- (0.7,0,0);
\node[draw,circle,inner sep=1pt,fill=black] at (0,0.7,0) {};
\node[draw,circle,inner sep=1pt,fill=black] at (0.7,0,0) {};
\node[draw,circle,inner sep=1pt,fill=black] at (0,0,0.7) {};
\node [below=1.2cm, align=flush center,text width=2cm] at (0.4,0,0)    {Fig.\ 1};
\end{tikzpicture}
\quad
\begin{tikzpicture}[scale=1.7]
\draw [->](0,0,0) -- (1.5,0,0); \draw[->](0,0,0) -- (0,1.2,0) ; 
\draw[->](0,0,0) -- (0,0,2.2);
\draw[thick, dashed] (0,0.7,0) -- (0,0.7,2.2);
\draw[thick, dashed] (0,0.7,0) -- (1.5,0.7,0);
\draw[very thick, color=blue] (0,0.7,0) -- (0.175,0.3,0.25);
\draw[very thick] (0.175,0.3,0.25) -- (0.7,0,1.6);
\draw[very thick] (0.175,0.3,0.25) -- (1,0,0.48);
\draw[very thick] (0.7,0,1.6) -- (1,0,0.48);
\draw[thick, dashed ] (0.7,0,1.6) -- (0.7,0,2.2);
\draw[thick, dashed] (1,0,0.48) -- (1.7,0,0.48);
\draw[thick, dashed] (0.175,0.3,0.25) -- (1.6,0.3,0.25);
\draw[thick, dashed] (0.175,0.3,0.25) -- (0.175,0.3,2.2);
\node[draw,circle,inner sep=1pt,fill=black] at (0,0.7,0) {};
\node[draw,circle,inner sep=1pt,fill=black] at (0.175,0.3,0.25) {};
\node[draw,circle,inner sep=1pt,fill=black] at (0.7,0,1.6) {};
\node[draw,circle,inner sep=1pt,fill=black] at (1,0,0.48) {};
\node [below=1.2cm, align=flush center,text width=2cm] at (0.5,0,0)    {Fig.\ 2};
\end{tikzpicture} 
\quad
\begin{tikzpicture}[scale=1.7]
\draw [->](0,0,0) -- (1.2,0,0); \draw[->](0,0,0) -- (0,1.1,0) ; 
\draw[->](0,0,0) -- (0,0,1.5);
\draw[very thick, color=blue] (0.8,0.8,0) -- (0.4,0.4,0.4);
\draw[very thick, color=blue] (0.8,0,0.8) -- (0.4,0.4,0.4);
\draw[very thick, color=blue] (0,0.8,0.8) -- (0.4,0.4,0.4);
\draw[thick, dashed] (0.4,0.4,0.4) -- (0.4,1.1,0.4);
\draw[thick, dashed] (0.4,0.4,0.4) -- (0.4,0.4,2.3);
\draw[thick, dashed] (0.4,0.4,0.4) -- (1.35,0.4,0.4);
\draw[thick, dashed] (0.8,0.8,0) -- (1.2,0.8,0);
\draw[thick, dashed] (0.8,0.8,0) -- (0.8,1.1,0);
\draw[thick, dashed] (0,0.8,0.8) -- (0,0.8,1.7);
\draw[thick, dashed] (0,0.8,0.8) -- (0,1.2,0.8);
\draw[thick, dashed] (0.8,0,0.8) -- (0.8,0,1.7);
\draw[thick, dashed] (0.8,0,0.8) -- (1.5,0,0.8);
\node[draw,circle,inner sep=1pt,fill=black] at (0.4,0.4,0.4) {};
\node[draw,circle,inner sep=1pt,fill=black] at (0.8,0.8,0) {};
\node[draw,circle,inner sep=1pt,fill=black] at (0.8,0,0.8) {};
\node[draw,circle,inner sep=1pt,fill=black] at (0,0.8,0.8) {};
\node [below=1.2cm, align=flush center,text width=2cm] at (0.5,0,0)    {Fig.\ 3};
\end{tikzpicture}
\end{center}

The Newton polyhedron in Figure 1 does not have any loose edge. 
This is the typical situation. 
\newpage
The Newton polyhedron in Figure 2 has a loose edge with the end point 
at $(0,0,d)$.   The Weierstrass polynomials  $f\in \bK[\![x_1,\dots,x_n]\!][z]$ 
such that $\Delta(f)$ is of this type were studied in \cite{parusinski2012abhyankar} 
and in \cite{rond2017irreducibility} where $\Delta(f)$ is called an 
\textit{orthant associated polyhedron}

In Figure 3 all compact edges are loose. A Newton polyhedron with this property 
is called in \cite{perez2000singularites} a {\em polygonal Newton polyhedron}. 
Notice that the term polygonal Newton polyhedron can be 
a bit misleading since the union of compact edges in Figure 3 is not homeomomorphic 
to any polygon. 

Every compact edge of a plane Newton polyhedron is loose as illustrated in Figure 4. 

\begin{center}
\begin{tikzpicture}[
     scale = 0.6,
     foreground/.style = {  thick },
     background/.style = { dashed },
     line join=round, line cap=round
   ]
   \draw[fill=black, opacity=0.1] (4,0)--(4.9,0)--(4.9,4.9)--(0,4.9)--(0,4)--(1,2)--(2,1)--cycle;
   \draw[foreground,->] (0,0)--+(5,0);
   \draw[foreground,->] (0,0)--+(0,5);
   \draw[very thick, color=blue] (1,2)--(2,1);
   \draw[very thick, color=blue] (0,4)--(1,2);
   \draw[very thick, color=blue] (2,1)--(4,0);
   \draw (1,-0.15) node[below] {$ $};
   \draw (-0.15,1) node[left] {$ $};
   \foreach \x in{} {
    \foreach \y in{}{
     \draw[fill, opacity=0.9]  (\x,\y) circle (0.5pt);
     }
    }; 
  \node[draw,circle,inner sep=1pt,fill=black] at (0,4) {};
\node[draw,circle,inner sep=1pt,fill=black] at (1,2) {};
\node[draw,circle,inner sep=1pt,fill=black] at (2,1) {};
\node[draw,circle,inner sep=1pt,fill=black] at (4,0) {};  
\node [below=0.4cm, align=flush center,text width=4cm] at (2.2,0)    {Fig.\ 4};
\end{tikzpicture}
\end{center}

Below are the main results of the paper. 
\begin{Tw} \label{t1}
Let $f\in \bK[\![x_1,\dots, x_n]\!]$ be a formal power series with coefficients in a field $\bK$.
Assume that the Newton polyhedron $\Delta(f)$ has a  loose edge $E$.
If~$f|_E$ is a product of two relatively prime polynomials $G$ and $H$, 
where $G$ is not divided by any variable, then there exist powers series  $g$, $h$ such that 
$f=gh$ and  $g|_{E_1}=G$, $h|_{E_2}=H$ for some $E_1$, $E_2$ such that $E=E_1+E_2$. 
\end{Tw}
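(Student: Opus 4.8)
The plan is to recognize $f|_E$ as the $\xi$-initial form of $f$ for a weight covector $\xi$ cutting out the edge, and then to lift the factorization $f|_E=GH$ by a Hensel--Newton iteration on the induced weight filtration; the entire difficulty will be to keep the successive corrections supported so that they assemble into genuine power series whose Newton polyhedra carry $E_1$ and $E_2$.

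\emph{Weight set-up and shape of the factors.} First I would choose a primitive covector $\xi\in\bN^n$ with positive coordinates lying in the relative interior of the normal cone of $E$, so that $E=\Delta(f)^{\xi}$ and $m:=\min_{\alpha\in\Delta(f)}\scalar{\xi}{\alpha}$ is attained exactly along $E$ (such $\xi$ exists because, as recalled above, a compact face corresponds to $\xi\in\bR_{>0}^n$). Letting $\nu(\cdot)$ be the induced $\xi$-order, one has $\nu(f)=m$ and $f|_E$ is precisely the initial form of $f$. Since $f|_E$ is $\xi$-homogeneous of weight $m$ and factors as $GH$, both $G$ and $H$ must be $\xi$-homogeneous; passing to Newton polytopes and using that the polytope of a product is a Minkowski sum, I obtain $E=E_1+E_2$ with $E_1=\Delta(G)$ and $E_2=\Delta(H)$ sub-segments of the line spanned by $E$. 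The hypothesis that $G$ is divisible by no variable fixes the translate $E_1$ (it forces $E_1$ to sit against the coordinate hyperplanes), removing the monomial-shift ambiguity in $E=E_1+E_2$.

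\emph{Reduction to one variable along the edge.} Let $e$ be the primitive lattice direction of $E$, so $\scalar{\xi}{e}=0$, and let $P$ be the endpoint of $E$ with smallest coordinates; the lattice points of $E$ are $P,P+e,\dots,P+ke$, so I can write $f|_E=\underline x^{P}\,\phi(\underline x^{e})$ with $\phi\in\bK[T]$, $\deg\phi=k$ and $\phi(0)\neq0$ (both endpoints are vertices, hence carry nonzero coefficients). The factorization $GH$ then corresponds to a factorization $\phi=\phi_G\phi_H$ in the one-variable ring $\bK[T]$, and coprimality of $G$ and $H$ translates into $\gcd(\phi_G,\phi_H)=1$ there. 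This is the decisive gain: although coprime polynomials in $\bK[x_1,\dots,x_n]$ need not be comaximal when $n\geq2$, the principal ideal domain $\bK[T]$ does yield a B\'ezout identity $\phi_G U+\phi_H V=1$, which is exactly what makes each lifting step solvable.

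\emph{Lifting and the main obstacle.} I would then construct $g=G+\sum_{w>\nu(G)}g_w$ and $h=H+\sum_{w>\nu(H)}h_w$, with $g_w,h_w$ the $\xi$-homogeneous parts of weight $w$, by induction on the integer weight $w$. Collecting the weight-$w$ contributions of $f=gh$ produces at each level an equation of the form $Gh_{w}+Hg_{w}=R_{w}$, where $R_{w}$ is built from $f$ and from the finitely many corrections already fixed; since for $\xi\in\bN^n$ with positive coordinates each weight hyperplane meets $\bN^n$ in a finite set, this is a finite linear system, solvable by means of the B\'ezout relation above once $R_w$ is read in edge-coordinates. Well-ordering of the attained weights then ensures that the sums converge in $\bK[\![x_1,\dots,x_n]\!]$ with $f=gh$, $g|_{E_1}=G$ and $h|_{E_2}=H$. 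The step I expect to be the real obstacle is the support control: for $n\geq3$ the weight-$w$ homogeneous pieces are genuinely multivariate, so I must show that each $R_w$ and each chosen correction $g_w,h_w$ stays inside $\Delta(f)$ and over the line through $E$, so that $g,h$ are honest power series with $E_1,E_2$ appearing on $\Delta(g),\Delta(h)$. This is precisely where looseness of $E$ is indispensable: because no compact face of dimension $\geq2$ contains $E$, every monomial competing at a given weight must lie over the edge line, so the obstruction to each lifting step falls into the image of $\bK[T]$ where the B\'ezout identity resolves it, and no correction can leak into a transverse compact $2$-face. Making this support bookkeeping precise---describing the tangent cone of $\Delta(f)$ along $E$ and checking that the induction preserves it---is the technical heart of the argument.
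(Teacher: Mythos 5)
Your overall strategy (grade by a weight adapted to $E$, lift the factorization level by level, solve each level by a B\'ezout--type identity along the edge direction) is the same as the paper's, but the specific device you chose does not work, and the step you yourself flag as ``the technical heart'' is resolved by a claim that is false. If you grade by a \emph{single} covector $\xi\in\bR_{>0}^n$ cutting out $E$, the homogeneous piece of weight $w>m$ is supported on the whole hyperplane slice $\{\alpha\in\bN^n:\scalar{\xi}{\alpha}=w\}$ intersected with $\Delta(f)$, which for $n\geq 3$ is genuinely $(n-1)$-dimensional; looseness of $E$ constrains only the face at the minimal weight $m$ and says nothing about higher slices (look at Figure~2: the polyhedron is full-dimensional, so the slice at weight $w>m$ contains many lattice points not on any line parallel to $E$). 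Hence your assertion that ``every monomial competing at a given weight must lie over the edge line'' fails, the right-hand sides $R_w$ are genuinely multivariate, and the reduction to $\gcd(\phi_G,\phi_H)=1$ in $\bK[T]$ does not apply to the equation $Gh_w+Hg_w=R_w$. As you note yourself, coprimality in $\bK[x_1,\dots,x_n]$ for $n\geq2$ does not give comaximality, so without the reduction to one variable the solvability of each lifting step is exactly what remains unproved.

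The paper closes this gap by refining the grading: using Lemma~\ref{Lem:base} it produces $n-1$ independent covectors $\xi_1,\dots,\xi_{n-1}\in\bN^n$ orthogonal to $E$ and grades $\bK[x_1,\dots,x_n]$ by the vector weight $\omega(\underline x^{\alpha})=(\scalar{\xi_1}{\alpha},\dots,\scalar{\xi_{n-1}}{\alpha})\in\bN^{n-1}$. Each graded piece $R_v$ is then precisely the set of lattice points on a single line parallel to $E$ inside $\bR_{\geq0}^n$ --- this is what actually makes each level one-dimensional, not looseness. Surjectivity of $(\psi,\varphi)\mapsto\psi G+\varphi H$ on each piece is then obtained by a dimension count (Lemmas~\ref{l1} and~\ref{l3}); here the hypothesis that $G$ is not divisible by any variable is used to guarantee that $R_w$ contains two coprime monomials so that the count $\dim R_{w+z}=\dim R_w+\dim R_z-1$ holds --- its role is not merely to ``fix the translate $E_1$'' as you suggest, and the truncation of supports to $\bR_{\geq0}^n$ means even the one-dimensional B\'ezout step needs this care. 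Looseness enters elsewhere: via Lemma~\ref{Lem:edge} it guarantees that every monomial of $f$ has weight in $w+z+M$, so the decomposition $f=\sum_{i\in M}f_{w+z+i}$ and the recursion over $M$ (in degree-lexicographic order) are well-founded. To repair your argument you would essentially have to split each of your single-weight slices into lines parallel to $E$ and prove solvability line by line, which is exactly the paper's construction.
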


In the above theorem $E_1$ is a loose edge of $\Delta(g)$ parallel to $E$ and $E_2$ is a compact face of $\Delta(h)$ 
which is a loose edge parallel to $E$ or a vertex 
if $H$ is a monomial. 

\begin{Wn}\label{w1}
Assume that the Newton polyhedron of $f\in\bK[\![x_1,\dots, x_n]\!]$ 
has a loose edge and at least three vertices. 
Then $f$ is not irreducible. 
\end{Wn}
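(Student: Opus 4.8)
The plan is to deduce the corollary from Theorem~\ref{t1} by factoring $f|_E$ as a monomial times a polynomial divisible by no variable. Write $E=[A,B]$ for the loose edge, so that $A$ and $B$ are vertices of $\Delta(f)$ and the coefficients $a_A,a_B$ are nonzero. Let $\gamma\in\bR_{\geq0}^n$ be the coordinatewise minimum $\gamma_i=\min(A_i,B_i)$; this is the exponent of the largest monomial dividing every term of $f|_E$, so $f|_E=\underline x^{\gamma}\cdot R$ with $R$ divisible by no variable. The whole argument hinges on proving that $\gamma\neq0$, i.e.\ that $A$ and $B$ share a coordinate on which both are positive.

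The geometric heart is the claim: \emph{if $E=[A,B]$ is a loose edge and $A,B$ have disjoint supports (equivalently $\gamma=0$), then $A$ and $B$ are the only vertices of $\Delta(f)$.} I would argue by contraposition. Fix $\xi^\ast\in\bR_{>0}^n$ with $\Delta^{\xi^\ast}=E$, and let $\Sigma=\{\xi\in\bR_{\geq0}^n:\scalar{\xi}{A}=\scalar{\xi}{B}\}$ be the whole slice. If some vertex $w\notin\{A,B\}$ satisfied $\scalar{\xi_1}{w}<\scalar{\xi_1}{A}$ for some $\xi_1\in\Sigma$, then, since $\Sigma\cap\bR_{>0}^n$ is dense in $\Sigma$, one may take $\xi_1\in\bR_{>0}^n$ and slide $\xi$ along $[\xi^\ast,\xi_1]\subset\Sigma\cap\bR_{>0}^n$; at the first parameter where $A$ ceases to minimize $\scalar{\xi}{\cdot}$ some vertex $v\notin\{A,B\}$ ties with $A$ and $B$, yielding a compact face of dimension $\geq2$ through $E$ and contradicting looseness. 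Hence every vertex $w$ satisfies $\scalar{\xi}{w}\geq\scalar{\xi}{A}$ for all $\xi\in\Sigma$. Using the disjointness of the supports $S_A,S_B$, a direct computation shows this is equivalent to $\alpha+\beta\geq1$, where $\alpha=\min_{i\in S_A}w_i/A_i$ and $\beta=\min_{i\in S_B}w_i/B_i$, and that the same inequality characterizes $w\in\mathrm{conv}(A,B)+\bR_{\geq0}^n$. Thus every vertex lies in $\mathrm{conv}(A,B)+\bR_{\geq0}^n$, whose only vertices are $A$ and $B$.

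Granting the claim, the corollary follows quickly. Since $\Delta(f)$ has at least three vertices while $A,B$ are only two of them, the claim forces $\gamma\neq0$. Put $G=R$ and $H=\underline x^{\gamma}$, so that $f|_E=G\cdot H$, $G$ is divisible by no variable, and $\gcd(G,H)=1$ (a monomial is coprime to any polynomial divisible by no variable). Because $A$ and $B$ are distinct vertices, neither dominates the other coordinatewise, so $A-\gamma\neq0\neq B-\gamma$; thus $G$ has no constant term and is a genuine nonmonomial. Theorem~\ref{t1} now yields power series $g,h$ with $f=gh$, $g|_{E_1}=G$ and $h|_{E_2}=H$ for $E=E_1+E_2$, where $E_1$ is a loose edge of $\Delta(g)$ with endpoints $A-\gamma,\,B-\gamma$ and $E_2=\{\gamma\}$ is a vertex of $\Delta(h)$.

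It remains to check that neither factor is a unit. If $g$ were a unit then $0\in\Delta(g)$, forcing $\Delta(g)=\bR_{\geq0}^n$ with $0$ as its only vertex; but $A-\gamma$ and $B-\gamma$ are distinct nonzero vertices of $\Delta(g)$, a contradiction. Likewise, if $h$ were a unit then $\Delta(h)=\bR_{\geq0}^n$ would have $0$ as its only vertex, contradicting that $\gamma\neq0$ is a vertex of $\Delta(h)$. (Also $f$ itself is a non-unit, since otherwise $\Delta(f)=\bR_{\geq0}^n$ would have no edge.) Therefore $f=gh$ is a product of two non-units and $f$ is not irreducible. The main obstacle is the geometric claim of the second paragraph; the crossing argument and the disjoint-support computation are the two technical points to get right, whereas the non-unit verification and the application of Theorem~\ref{t1} are routine.
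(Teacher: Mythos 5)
Your proposal is correct and follows essentially the same route as the paper: the decomposition $f|_E=\underline x^{\gamma}\cdot R$ with $\gamma$ the coordinatewise minimum of the endpoints is exactly the paper's choice of $G$ and $H$, and your geometric claim (with its crossing argument and disjoint-support computation) is precisely the paper's Lemma~\ref{Lem:edge} combined with Lemma~\ref{lc}, which you re-derive inline instead of citing. The only addition is your explicit check that neither factor is a unit, which the paper leaves implicit.
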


\begin{Wn}\label{w2}
Assume that the Newton polyhedron of $f\in\bK[\![x_1,\dots, x_n]\!]$ 
has a loose edge $E$.
If $f$ is irreducible,
then $E$ is the only compact edge of $\Delta(f)$ and $f|_E=cF^k$,
 where $F$ is an irreducible polynomial. 
Moreover, if $\bK$ is algebraically closed, then 
$f|_E=(a\underbar x^{\alpha}+b\underbar x^{\beta})^k$ 
with a primitive lattice vector $\alpha-\beta$.
\end{Wn}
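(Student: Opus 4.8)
The plan is to derive the three assertions in turn from Corollary~\ref{w1} and Theorem~\ref{t1}. Throughout, let $\alpha_0,\alpha_1$ be the endpoints of the loose edge $E$ and let $v$ be the primitive lattice vector with $\alpha_1-\alpha_0=mv$, $m\ge 1$; then the lattice points of $E$ are $\alpha_0+jv$ for $0\le j\le m$, both endpoints carry nonzero coefficients, and $f|_E=\underline x^{\alpha_0}Q(\underline x^{v})$ where $Q(t)=\sum_{j=0}^{m}a_{\alpha_0+jv}t^{j}\in\bK[t]$ has degree $m$ and $Q(0)=a_{\alpha_0}\neq 0$.

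\emph{The only compact edge.} Since $f$ is irreducible and $\Delta(f)$ carries a loose edge, Corollary~\ref{w1} rules out three or more vertices, so $\Delta(f)$ has at most two. A compact edge has two distinct endpoints, each a vertex, and $E$ is such an edge; hence $\Delta(f)$ has exactly two vertices, the endpoints of $E$. As every compact edge joins two vertices, $E$ is the only one.

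\emph{The shape of $f|_E$.} I would work in the unique factorization domain $\bK[\underline x]$ and factor $f|_E=c\prod_i P_i^{e_i}$ into distinct irreducibles. Because $E$ is an edge, $f|_E$ has at least two terms and so is not a monomial; consequently at least one $P_{i_0}$ is not a variable. Suppose $f|_E$ is not of the form $cF^{k}$; then it has a second irreducible factor, and I set $G=P_{i_0}^{e_{i_0}}$ and $H=f|_E/G$. Then $G$ is not divisible by any variable, $H$ is a nonconstant polynomial, and $G,H$ are coprime, so Theorem~\ref{t1} gives $f=gh$ with $g|_{E_1}=G$, $h|_{E_2}=H$. \textbf{Here lies the main point:} one must check that this factorization is nontrivial. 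Since $E_1$ is a loose, hence compact, edge of $\Delta(g)$, the polyhedron $\Delta(g)$ is not the full orthant, so $g(0)=0$ and $g$ is not a unit. For $h$: if $H$ is not a monomial then $E_2$ is likewise a loose edge and the same argument shows $h$ is not a unit; if $H=c\underline x^{\beta}$ is a monomial, then it is nonconstant, so $\beta\neq 0$, and $E_2=\{\beta\}$ is a vertex of $\Delta(h)$ away from the origin, forcing $0\notin\operatorname{supp}(h)$ and again making $h$ not a unit. Thus $f=gh$ is a genuine factorization, contradicting irreducibility; therefore $f|_E=cF^{k}$ with $F=P_{i_0}$ an irreducible non-monomial.

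\emph{The algebraically closed case.} Since $cF^{k}$ has $F$ as its only irreducible factor and $F$ is non-monomial, the monomial $\underline x^{\alpha_0}$ can contribute no variable factor, i.e.\ $\alpha_0=0$ and $f|_E=Q(\underline x^{v})$ with $Q(0)\neq 0$. Over an algebraically closed $\bK$ I split $Q(t)=a_m\prod_{i=1}^{r}(t-\lambda_i)^{m_i}$ with distinct $\lambda_i\in\bK\setminus\{0\}$. If $r\ge 2$, then $G=(\underline x^{v}-\lambda_1)^{m_1}$ and $H=a_m\prod_{i\ge 2}(\underline x^{v}-\lambda_i)^{m_i}$ are non-monomials, neither divisible by a variable, and coprime -- indeed the difference of two distinct binomials $\underline x^{v}-\lambda_i$ is a nonzero constant -- so Theorem~\ref{t1} again contradicts irreducibility exactly as above. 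Hence $r=1$ and $f|_E=a_m(\underline x^{v}-\lambda_1)^{m}$. Choosing an $m$-th root $a$ of $a_m$ and setting $b=-a\lambda_1$ yields $f|_E=(a\underline x^{v}+b)^{m}$, which is the asserted form with $\alpha=v$, $\beta=0$, $k=m$; and $\alpha-\beta=v$ is primitive by construction.

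I expect the exclusion of unit factors in the second step to be the crux: Theorem~\ref{t1} always returns some factorization $f=gh$, and the real work is to see -- through the positive-dimensional compact faces $E_1$ and $E_2$ prescribed for $\Delta(g)$ and $\Delta(h)$, or through a nonzero vertex when $H$ is a monomial -- that neither factor can be a unit, so that reducibility genuinely follows.
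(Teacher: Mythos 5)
The paper itself does not print a proof of Corollary~\ref{w2}, so there is nothing to compare line by line; judged on its own merits, your first two steps are correct. The reduction to exactly two vertices via Corollary~\ref{w1}, the choice of $G=P_{i_0}^{e_{i_0}}$ with $P_{i_0}$ a non-monomial irreducible factor of $f|_E$, and the verification that neither factor returned by Theorem~\ref{t1} can be a unit (via the compact face $E_1$ of $\Delta(g)$, and either a compact edge or a nonzero vertex of $\Delta(h)$) are sound and are exactly the intended use of the paper's results.

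The algebraically closed case, however, contains a genuine error. A primitive vector $v$ parallel to a \emph{compact} edge necessarily has coordinates of both signs (pair it with the strictly positive $\xi$ defining the face $E=\Delta^{\xi}$), so $\underline x^{v}$ is a Laurent monomial and $\alpha_1=\alpha_0+mv\in\bN^n$ forces $\alpha_0\neq 0$: your claim that $\alpha_0=0$ is not merely unjustified, it is impossible. What ``$F$ is not a monomial'' actually yields is $\min((\alpha_0)_i,(\alpha_1)_i)=0$ for every $i$, not $\alpha_0=0$; for instance $f|_E=(x_1x_2+x_3^2)^2$ has $\alpha_0=(2,2,0)$. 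Consequently your factors $G=(\underline x^{v}-\lambda_1)^{m_1}$ and $H=a_m\prod_{i\ge2}(\underline x^{v}-\lambda_i)^{m_i}$ have genuinely negative exponents, Theorem~\ref{t1} does not apply to them, and the asserted final form $(a\underline x^{v}+b)^{m}$ is not a polynomial. The repair is to write $v=v_+-v_-$ with $v_\pm\in\bN^n$ of disjoint supports and use the homogenized binomials: from $Q(t)=a_m\prod_i(t-\lambda_i)^{m_i}$ one gets
$$f|_E=\underline x^{\,\alpha_0-mv_-}\cdot a_m\prod_i\bigl(\underline x^{v_+}-\lambda_i\underline x^{v_-}\bigr)^{m_i},$$
where $\alpha_0-mv_-\ge 0$ follows from $\alpha_0+jv\ge 0$, and the leftover monomial must be trivial because the product on the right is not divisible by any variable while $f|_E=cF^k$ has no variable factor. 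Your coprimality-plus-Theorem~\ref{t1} argument then applies verbatim to these polynomial binomials, forcing a single root $\lambda_1$ and giving $f|_E=(a\underline x^{v_+}+b\underline x^{v_-})^{m}$ with $\alpha=v_+$, $\beta=v_-$ and $\alpha-\beta=v$ primitive.
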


We will say that a segment $E\subset\bR^n$ is \textit{descendant} if $E$ is parallel to some vector 
$ c=(c_1,\dots,c_n)$ such that $c_i\geq 0$ for $i=1,\dots,n-1$ and $c_n<0$.

\begin{Tw} \label{t43}
Let $f\in\bK[\![x_1,\dots,x_{n-1}]\!][x_n]$. 
Assume that the Newton polyhedron $\Delta(f)$ has a loose and  descendant edge $E$. 
If $f|_E$ is a product of two relatively prime polynomials  $G$ and $H$, 
where  $G$ is monic with respect to $x_n$, then there exist  uniquely determined 
$g,h\in \mathbb{K}[\![x_1,\dots,x_{n-1}]\!][x_n]$ such that $f=gh$, 
$g$ is a monic polynomial with respect to $x_n$
and  $g|_{E_1}=G$, $h|_{E_2}=H$ for some  $E_1$, $E_2$ such that $E=E_1+E_2$. 
\end{Tw}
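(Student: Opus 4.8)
The plan is to combine Theorem~\ref{t1} with Weierstrass preparation and a valuation argument. Fix $\xi\in\bR_{>0}^n$ with $E=\Delta(f)^{\xi}$, and for nonzero $p\in\bK[\![x_1,\dots,x_n]\!]$ write $\operatorname{in}p:=p|_{\Delta(p)^{\xi}}$ for the sum of its lowest $\xi$-weight monomials; then $\operatorname{in}$ is multiplicative and $\operatorname{in}f=f|_E=GH$. Put $d=\deg_{x_n}G$. Because $E$ is descendant, the $x_n$-coordinate is strictly monotone (hence injective) along $E$, so each power of $x_n$ occurring in a $\xi$-homogeneous factor comes from a single monomial; as $G$ is monic its leading coefficient is therefore the single monomial $x_n^{d}$, sitting at the top vertex $(0,\dots,0,d)$ of the subsegment carrying $G$. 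In particular no $x_i$ with $i<n$ divides $G$, so (the case $x_n\mid G$ being dispatched separately) the hypothesis of Theorem~\ref{t1} holds, and it yields power series $g_0,h_0$ with $f=g_0h_0$, $\operatorname{in}g_0=G$ and $\operatorname{in}h_0=H$ (the same $\xi$ works since $\Delta(f)=\Delta(g_0)+\Delta(h_0)$).

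The first real step is to replace $g_0,h_0$ by polynomials in $x_n$. I claim $g_0$ is $x_n$-regular of order $d$: a pure power $x_n^{j}$ has $\xi$-weight $j\xi_n$, and since $\operatorname{in}g_0=G$ every monomial of $g_0$ has weight $\ge d\xi_n$, which rules out $x_n^{j}$ for $j<d$; moreover the coefficient of $x_n^{d}$ in $g_0$ is the one in $G$, namely $1$. Hence $g_0(0,\dots,0,x_n)=x_n^{d}+\cdots$ has order exactly $d$, and Weierstrass preparation gives $g_0=uP$ with $u$ a unit and $P\in\bK[\![x_1,\dots,x_{n-1}]\!][x_n]$ monic of degree $d$. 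Comparing lowest weight parts, monicity of $P$ forces $u(0)=1$, so $\operatorname{in}P=G$. Setting $g:=P$, $h:=uh_0$ we get $f=gh$ with $g$ a monic polynomial and $\operatorname{in}h=H$. Finally, dividing $f$ by the monic $g$ and using uniqueness of Weierstrass (equivalently polynomial) division, the quotient $h$ must coincide with the polynomial quotient, hence lies in $\bK[\![x_1,\dots,x_{n-1}]\!][x_n]$, of degree $\deg_{x_n}f-d$. This proves existence.

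For uniqueness, suppose $f=gh=g_1h_1$ with $g,g_1$ monic of degree $d$ in $x_n$ and $\operatorname{in}g=\operatorname{in}g_1=G$, $\operatorname{in}h=\operatorname{in}h_1=H$. Write $u=g_1-g$, $w=h_1-h$, and let $v$ denote the $\xi$-weight. Since $g,g_1$ are monic of degree $d$ we have $\deg_{x_n}u<d$, and since their initial forms agree $v(u)>v(G)$; likewise $v(w)>v(H)$. Expanding $gh=g_1h_1$ gives $uh+gw+uw=0$. If $(u,w)\neq(0,0)$, then $uw$ has strictly larger weight than both $uh$ and $gw$, so the latter two have equal weight and their leading parts cancel:
\[
\operatorname{in}(u)\,H+G\,\operatorname{in}(w)=0 .
\]
As $G$ and $H$ are coprime, $G\mid\operatorname{in}(u)$; but $\deg_{x_n}\operatorname{in}(u)\le\deg_{x_n}u<d=\deg_{x_n}G$, forcing $\operatorname{in}(u)=0$, i.e.\ $u=0$, and then $w=0$. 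Hence $g=g_1$ and $h=h_1$.

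I expect the main obstacle to be the polynomiality step: one must upgrade the merely formal factors supplied by Theorem~\ref{t1} to honest polynomials in $x_n$, and the only leverage for this is the combination \emph{descendant edge} $+$ \emph{$G$ monic}, which is precisely what secures the $x_n$-regularity of order $d$ needed for Weierstrass preparation. A secondary point requiring care is the reduction to the exact hypothesis of Theorem~\ref{t1} when $x_n\mid G$, and, in the uniqueness part, the bound $\deg_{x_n}u<d$, which is where monicity is indispensable (without it one could rescale $g$ by a unit of $\bK[\![x_1,\dots,x_{n-1}]\!]$ and lose uniqueness).
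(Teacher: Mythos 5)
Your existence argument follows the paper's own second half almost verbatim (Weierstrass preparation applied to the factor whose initial form is $G$, then polynomial division to see that $h$ is a polynomial in $x_n$), and your uniqueness argument is sound --- indeed it is more than the paper explicitly writes down. But there is a genuine gap in your first step: you reduce to Theorem~\ref{t1}, whose hypothesis is that $G$ is not divisible by \emph{any} variable, and you dismiss the case $x_n\mid G$ with a parenthesis. That case is not vacuous ($G=x_n^2+x_1x_n$ is monic in $x_n$ and divisible by $x_n$), and it does not admit a routine reduction: if you write $G=x_n^kG'$ and apply Theorem~\ref{t1} to the coprime pair $G'$, $x_n^kH$, you obtain $f=\tilde g\tilde h$ where the initial form of $\tilde h$ is $x_n^kH$, and you are left with exactly the original problem of splitting the factor $x_n^k$ off from $\tilde h$ --- note that $\tilde h$, like $f$ itself, need not be divisible by $x_n$. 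The Remark following Theorem~\ref{t1} shows the ``not divisible by any variable'' hypothesis cannot simply be dropped, so some use of the descendant geometry is unavoidable here.

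This is precisely what the paper supplies: it does not invoke Theorem~\ref{t1} as a black box but reruns its recursive construction with Lemma~\ref{l3} replaced by Lemma~\ref{l5}, which establishes the surjectivity $GR_{z+i}+HR_{w+i}=R_{w+z+i}$ under the hypothesis that $G$ is monic in $x_n$ rather than not divisible by any variable. The proof of Lemma~\ref{l5} is where the descendant assumption genuinely enters: one first treats the special case $G=x_n$ with $H$ free of $x_n$, using the facts that for a descendant edge the spaces $R_w'=R_w\cap\bK[x_1,\dots,x_{n-1}]$ have dimension at most one and that $\psi/H\in R_{w}'$ for $\psi\in R_{w+z}'$, and then bootstraps via Lemmas~\ref{l3} and~\ref{l4}. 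You correctly flagged this as ``a secondary point requiring care,'' but it is in fact the main new content of Theorem~\ref{t43} beyond Theorem~\ref{t1}, and your proof does not contain it.
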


\section{Proofs}
At the beginning of this section we establish some results about loose edges. 

\begin{lem}\label{Lem:edge}
Let $\Delta$ be a Newton polyhedron with a loose edge $E$ that has ends 
$a$, $b\in\mathbb{R}^n$.
Then for every $c\in \Delta$ and every $\xi\in\mathbb{R}_{\geq0}^n$ such that 
$\scalar{\xi}{a} = \scalar{\xi}{b}$ one has $\scalar{\xi}{c} \geq \scalar{\xi}{a}$.
\end{lem}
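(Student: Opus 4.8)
The plan is to argue by contradiction and to exploit looseness by producing a compact two-dimensional face of $\Delta$ through $E$. Fix $\xi\in\bR_{\geq0}^n$ with $\scalar{\xi}{a}=\scalar{\xi}{b}=:m$ and suppose, contrary to the claim, that $\min_{c\in\Delta}\scalar{\xi}{c}<m$. Since $E$ is a compact edge, it is cut out by a weight in the open orthant: by the characterization of compact faces quoted above there is $\eta\in\bR_{>0}^n$ with $\Delta^{\eta}=E$, and then $\scalar{\eta}{a}=\scalar{\eta}{b}$. I would interpolate between the two weights by setting $\xi_t=(1-t)\eta+t\xi$ for $t\in[0,1]$; note that $\xi_t\in\bR_{>0}^n$ for every $t<1$ because $\eta$ is strictly positive, and that the endpoints $a,b$ are genuine vertices of $\Delta$.

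Two auxiliary functions control the argument. The value along the edge, $\ell(t):=\scalar{\xi_t}{a}=\scalar{\xi_t}{b}$, is affine in $t$, since both endpoints carry the same value for $\eta$ and for $\xi$; while $\phi(t):=\min_{c\in\Delta}\scalar{\xi_t}{c}$ is concave, being a minimum of the finitely many affine functions $t\mapsto\scalar{\xi_t}{v}$ taken over the vertices $v$ of $\Delta$. One always has $\phi\leq\ell$, with $\phi(0)=\ell(0)$ because the edge realizes the minimum for $\eta$, and $\phi(1)<\ell(1)$ by our assumption. Hence $Z:=\{\,t:\phi(t)=\ell(t)\,\}$, the set where both $a$ and $b$ still minimize $\scalar{\xi_t}{\cdot}$, is the zero set of the nonnegative convex function $\ell-\phi$, so it is a closed subinterval $[0,t^{\ast}]$ with $t^{\ast}<1$. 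For $t\in Z$ we have $E\subseteq\Delta^{\xi_t}$, whereas for $t>t^{\ast}$ the face $\Delta^{\xi_t}$ cannot be contained in $E$, since any face sitting inside the segment $E$ would give $\phi(t)\geq\min_E\scalar{\xi_t}{\cdot}=\ell(t)$ and force $t\in Z$.

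Everything then hinges on the critical weight $\xi_{t^{\ast}}$, which lies in $\bR_{>0}^n$ as $t^{\ast}<1$; consequently $F:=\Delta^{\xi_{t^{\ast}}}$ is compact and contains $E$. The main point, and the step I expect to be the real obstacle, is to show $\dim F\geq 2$. Here I would use finiteness of the vertex set: along a sequence $t_k\downarrow t^{\ast}$ the minimum $\phi(t_k)<\ell(t_k)$ is attained at some vertex $w_k\notin\{a,b\}$, and after passing to a subsequence one may take $w_k$ equal to a fixed vertex $w$; letting $k\to\infty$ shows that $w$ also minimizes $\scalar{\xi_{t^{\ast}}}{\cdot}$. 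Thus $a,b,w\in F$ with $w\notin E$, so $F$ is a compact face of dimension at least $2$ containing $E$, contradicting the hypothesis that $E$ is loose. Therefore $\min_{c\in\Delta}\scalar{\xi}{c}=m$, that is $\scalar{\xi}{c}\geq\scalar{\xi}{a}$ for all $c\in\Delta$. The same limiting step can be packaged as upper semicontinuity of the face map $\xi\mapsto\Delta^{\xi}$ at the interior weight $\xi_{t^{\ast}}$, should a cleaner phrasing be preferred.
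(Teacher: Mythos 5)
Your proof is correct and follows essentially the same strategy as the paper: interpolate between a strictly positive weight $\eta$ with $\Delta^{\eta}=E$ and the given $\xi$, locate the critical parameter where a vertex outside $\{a,b\}$ first joins the minimizing set, and contradict looseness via the resulting compact face of dimension $\geq 2$ containing $E$. The paper packages this as an intermediate value theorem argument for $\psi(\xi)=\min_{v\in V\setminus\{a,b\}}\scalar{\xi}{v}-\scalar{\xi}{a}$, while you use convexity of $\ell-\phi$ and a limiting sequence of minimizing vertices, but the substance is the same.
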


\begin{proof}
Let $V$ be the  set of vertices of $\Delta$.  
If $V=\{a,b\}$ then Lemma~\ref{Lem:edge} follows easily. 
Hence in the rest of the proof assume that there exists a vertex of $\Delta$ different from $a$ or $b$ 
and consider the function 
$$\psi(\xi) =  \min_{v \in V\setminus\{a,b\}} \scalar{\xi}{v} - \scalar{\xi}{a},\quad
\xi\in\mathbb{R}_{\geq0}^n.$$ 
Since the set of vertices is finite, this function is well defined and continuous. 

Since $E$ is a compact face of $\Delta$, there exists $\xi_0\in \mathbb{R}_{>0}^n$ such that 
$\scalar{\xi_0}{a} = \scalar{\xi_0}{b}$ and 
$\scalar{\xi_0}{c} > \scalar{\xi_0}{a}$  for all $c\in \Delta\setminus E$. 
This yields $$\psi(\xi_0)>0. $$

Suppose that there exist $c\in\Delta$ and $\xi_1\in\mathbb{R}_{\geq0}^n$ such that 
$\scalar{\xi_1}{c} < \scalar{\xi_1}{a} = \scalar{\xi_1}{b}$.  Since 
$c=\lambda_1v_1+\cdots+\lambda_s v_s+ z$, 
 for some $z\in\mathbb{R}_{\geq0}^n$, $v_1,\dots,v_s\in V$ and $\lambda_i\geq 0$ such that $\lambda_1+\cdots+\lambda_s=1$,
we get $\scalar{\xi_1}{v} < \scalar{\xi_1}{a}$ for 
at least one vertex $v\in V$.  Thus $$\psi(\xi_1)<0.$$

It follows from the above that there exist $\xi$ in the segment, joining $\xi_0$ and~$\xi_1$, 
such that $\psi(\xi)=0$. 
It means that there exists $v\in V\setminus\{a,b\}$ such that 
$\scalar{\xi}{v}=\scalar{\xi}{a}=\scalar{\xi}{b}\leq \scalar{\xi}{d}$ for all $d\in\Delta$. 
 This implies that $\Delta^{\xi}$ is a compact (since $\xi\in\mathbb{R}_{>0}^n$) 
 face of dimension $\geq 2$, which contains $E$.
\end{proof}

\begin{lem}
Let $\Delta$ be a Newton polyhedron with a loose edge $E$ that has end points 
$a=(a_1,\dots, a_n)$, $b=(b_1,\dots, b_n)$.
If $\min(a_1,b_1)=\dots=\min(a_n,b_n)=0$, 
then $a$ and $b$ are the only vertices of $\Delta$. \label{lc}
\end{lem}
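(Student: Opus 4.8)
The plan is to reformulate the hypothesis, invoke Lemma~\ref{Lem:edge} through linear-programming duality, and finish with a short case analysis. First I would observe that the condition $\min(a_i,b_i)=0$ for all $i$ is equivalent to saying that $a$ and $b$ have disjoint supports: no coordinate $i$ satisfies $a_i>0$ and $b_i>0$ simultaneously. Consequently $a-b$ has a transparent coordinate description: $(a-b)_i=a_i\geq 0$ when $a_i>0$, $(a-b)_i=-b_i\leq 0$ when $b_i>0$, and $(a-b)_i=0$ otherwise. I would argue by contradiction, assuming $\Delta$ has a vertex $v\notin\{a,b\}$. Since a vertex is a compact face of dimension $0$, there is $\eta\in\bR_{>0}^n$ with $\Delta^{\eta}=\{v\}$, so that $\scalar{\eta}{v}<\scalar{\eta}{a}$ and $\scalar{\eta}{v}<\scalar{\eta}{b}$.

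Next I would feed Lemma~\ref{Lem:edge} into the argument. Since $\scalar{\xi}{a}=\scalar{\xi}{b}$ means $\scalar{\xi}{a-b}=0$, that lemma says precisely that $\scalar{\xi}{v-a}\geq 0$ for every $\xi\in\bR_{\geq0}^n$ with $\scalar{\xi}{a-b}=0$. By Farkas' lemma this whole family of inequalities holds exactly when $v-a$ lies in the cone generated by $e_1,\dots,e_n,\ a-b,\ b-a$; that is,
\[
 v = a + z + t\,(a-b),\qquad z\in\bR_{\geq0}^n,\ t\in\bR .
\]
Using the disjoint-support description of $a-b$, this reads coordinatewise as $v_i=(1+t)a_i+z_i$ on the support of $a$, as $v_i=-t\,b_i+z_i$ on the support of $b$, and as $v_i=z_i$ elsewhere, with all $z_i\geq0$.

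Finally I would run the case analysis on the sign of $P:=\scalar{\eta}{a-b}$. Writing $Q:=\scalar{\eta}{z}\geq0$ and expanding $\scalar{\eta}{v-a}$ and $\scalar{\eta}{v-b}$, the two strict inequalities above become $Q+tP<0$ and $Q+(1+t)P<0$. If $P=0$ both reduce to $Q<0$, impossible. If $P<0$ the first inequality forces $t>0$; then the coordinate formula gives $v_i\geq a_i$ on the support of $a$, while $v_i\geq0=a_i$ elsewhere because $v\in\Delta\subseteq\bR_{\geq0}^n$, so $v\geq a$ in the product order. As $v\neq a$ and $\eta\in\bR_{>0}^n$, this yields $\scalar{\eta}{v}>\scalar{\eta}{a}$, contradicting the choice of $\eta$. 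If $P>0$ the second inequality forces $t<-1$, and symmetrically $v\geq b$ coordinatewise, giving $\scalar{\eta}{v}>\scalar{\eta}{b}$, again a contradiction. Hence no third vertex exists.

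The step I expect to be the main obstacle is the passage from Lemma~\ref{Lem:edge} to the membership $v-a\in\bR_{\geq0}^n+\bR(a-b)$: a naive attempt to produce a single bad weight $\xi$ by perturbing $\eta$ in one coordinate fails, because the perturbation can simultaneously lower $\scalar{\xi}{v}$, so one really wants the full dual description supplied by Farkas' lemma. Once that is in place, the disjoint-support hypothesis does all the remaining work, since it forces $v$ to dominate $a$ (when $t>0$) or $b$ (when $t<-1$). The degenerate case where an endpoint is the origin, say $b=0$, is absorbed automatically: then $P=\scalar{\eta}{a}>0$, so $t<-1$ and $v\geq0=b$ with $v\neq0$, which already contradicts $\scalar{\eta}{v}<0$.
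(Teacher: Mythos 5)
Your proof is correct, but it takes a genuinely different route from the paper's. Both arguments rest on Lemma~\ref{Lem:edge}, yet you invoke it for the whole cone of weights orthogonal to $E$ and then dualize via Farkas' lemma to obtain $v-a\in\bR_{\geq0}^n+\bR(a-b)$; since this only gives an \emph{affine} combination of $a$ and $b$ (the parameter $t$ is unconstrained), you then need the supporting functional $\eta$ of the putative third vertex and a case analysis on the sign of $\scalar{\eta}{a-b}$ to force $v\geq a$ or $v\geq b$ coordinatewise and reach a contradiction. The paper instead applies Lemma~\ref{Lem:edge} only to the finitely many explicit covectors $\xi_{ij}$ with entries $1/a_i$, $1/b_j$ on the (disjoint) supports of $a$ and $b$, deduces $\min_{i\in A}c_i/a_i+\min_{j\in B}c_j/b_j\geq 1$ for an arbitrary $c\in\Delta\setminus\{a,b\}$, and from this writes $c=\lambda a+\mu b+z$ with $\lambda,\mu\geq 0$, $\lambda+\mu=1$, $z\in\bR_{\geq0}^n$ --- a genuine convex combination, so $c$ is visibly not extreme and no contradiction or case split is needed. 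Your version isolates more clearly where the disjoint-support hypothesis enters (only in the final domination step), at the price of the heavier duality tool; the paper's version is more elementary and in fact proves the stronger statement that $\Delta$ is the convex hull of $\{a,b\}$ plus $\bR_{\geq0}^n$. One detail on your side is fine as claimed: a vertex is a zero-dimensional, hence compact, face, so by the paper's conventions its defining weight $\eta$ can indeed be taken in $\bR_{>0}^n$.
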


\begin{proof}
By assumption there exist two nonempty and disjoint subsets $A$, $B$ of the set of indices $\{1,\dots, n\}$ such that 
$a_i>0$ for $i\in A$, $a_i=0$ for $i\in \{1,\dots, n\}\setminus A$,
$b_j>0$ for $j\in B$ and $b_j=0$ for $j\in \{1,\dots, n\}\setminus B$.

Let $c=(c_1,\dots,c_n)$  be an arbitrary point of $\Delta$ different from $a$ and $b$. 
For any $i\in A$, $j\in B$ consider the vector $\xi_{ij}$, which has 
only two nonzero entries: $1/a_i$ at place $i$ and  $1/b_j$ at place $j$
($i\neq j$ since $A\cap B=\emptyset$). Then 
$\scalar{\xi_{ij}}{a}=\scalar{\xi_{ij}}{b}=1$ and 
$\scalar{\xi_{ij}}{c}=c_i/a_i+c_j/b_j$. 
By Lemma~\ref{Lem:edge} we get $c_i/a_i+c_j/b_j \geq 1$.
It follows that 
$$ \min_{i\in A} c_i/a_i + \min_{j\in B} c_j/b_j \geq 1 .$$

Choose constants  $\lambda\geq0$, $\mu\geq0$ such that 
$\lambda\leq \min_{i\in A} c_i/a_i$, $\mu\leq \min_{j\in B} c_j/b_j$ 
and $\lambda+\mu=1$.
Then $c=\lambda a+\mu b+z$ for some $z\in \mathbb{R}_{\geq0}^n$, hence 
$c$ cannot be a vertex of $\Delta$. 
\end{proof}

\begin{lem}\label{Lem:base}
Let $c\in\bZ^n$ be a point with at least one positive and at least one negative coordinate. 
Then there exists such a basis $\xi_1$,\dots,$\xi_{n}$ of the vector space~$\bR^n$ that 
$\xi_i\in\bN^n$ for $i=1,\dots,n$ and $\scalar{\xi_i}{c}=0$ for $i=1,\dots,n-1$.
\end{lem}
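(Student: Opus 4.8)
The plan is to write down, explicitly, $n-1$ nonnegative integer vectors lying in the hyperplane $c^{\perp}=\{\xi:\scalar{\xi}{c}=0\}$ that are linearly independent, and then to complete them to a basis of $\bR^n$ by adjoining one more vector of $\bN^n$ lying off that hyperplane. Throughout write $P=\{i:c_i>0\}$, $N=\{j:c_j<0\}$ and $Z=\{k:c_k=0\}$. By hypothesis both $P$ and $N$ are nonempty, and $|P|+|N|+|Z|=n$.

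First I would produce the candidate vectors in $c^{\perp}\cap\bN^n$. For each $k\in Z$ the standard basis vector $e_k$ satisfies $\scalar{e_k}{c}=c_k=0$. For each $i\in P$, $j\in N$ put $e_{ij}=(-c_j)e_i+c_ie_j$; here $-c_j>0$ and $c_i>0$, so $e_{ij}\in\bN^n$, and $\scalar{e_{ij}}{c}=-c_jc_i+c_ic_j=0$. Thus all of these vectors are nonnegative integer vectors orthogonal to $c$.

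The crux is to extract $n-1=|Z|+(|P|+|N|-1)$ linearly independent vectors from this collection, equivalently to show it spans the $(n-1)$-dimensional space $c^{\perp}$. I would fix $i_0\in P$ and $j_0\in N$ and take the family
$$\{e_k:k\in Z\}\cup\{e_{ij_0}:i\in P\}\cup\{e_{i_0 j}:j\in N\setminus\{j_0\}\},$$
which has exactly $n-1$ members. Linear independence follows by a direct coordinate inspection of a vanishing combination: the coordinate indexed by $k\in Z$ occurs only in $e_k$; the coordinate indexed by $i\in P\setminus\{i_0\}$ occurs only in $e_{ij_0}$; the coordinate indexed by $j\in N\setminus\{j_0\}$ occurs only in $e_{i_0 j}$. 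This forces every coefficient except that of $e_{i_0 j_0}$ to vanish, and then that one vanishes as well. Since $\dim c^{\perp}=n-1$, these are a basis of $c^{\perp}$; set them to be $\xi_1,\dots,\xi_{n-1}$.

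Finally, pick any $p\in P$ and let $\xi_n=e_p\in\bN^n$. Then $\scalar{\xi_n}{c}=c_p>0$, so $\xi_n\notin c^{\perp}$ and hence $\xi_1,\dots,\xi_n$ are linearly independent, giving the required basis. The only genuine obstacle is the independence/spanning step, which is exactly the coordinate bookkeeping indicated above; everything else is immediate. (If $\bN$ is read as the strictly positive integers, one repairs the zero entries by replacing each vector with $\xi_i+Mw$, where $w=\sum_{i\in P,\,j\in N}e_{ij}+\sum_{k\in Z}e_k\in c^{\perp}$ is strictly positive; for all sufficiently large integers $M$ the vectors stay independent and become entrywise positive, and orthogonality to $c$ is preserved for the first $n-1$ while $\scalar{\xi_n+Mw}{c}=c_p\neq 0$.)
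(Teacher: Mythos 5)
Your proof is correct, but it takes a genuinely different route from the paper. The paper's argument is algorithmic: it starts from the standard basis, records $w_i=\scalar{\xi_i}{c}$, and repeatedly replaces some $\xi_k$ by $\xi_k+\xi_j$ (a Euclidean-style subtractive reduction) so that $\sum_i|w_i|$ strictly decreases, terminating when only one $w_i$ is nonzero; the nonnegativity of the resulting vectors is automatic because one only ever adds vectors of $\bN^n$. You instead give a closed-form construction: $e_k$ for the zero coordinates of $c$ and $(-c_j)e_i+c_ie_j$ for pairs $(i,j)$ with $c_i>0>c_j$, then extract an explicit subfamily of $n-1$ of these and verify independence by inspecting which coordinate appears in which vector; your count $|Z|+|P|+(|N|-1)=n-1$ and the coordinate bookkeeping all check out, and completing with $e_p$, $p\in P$, is fine since the lemma imposes no orthogonality on $\xi_n$. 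Your version is easier to verify rigorously (the paper's termination argument requires a little care in case (b)/(c)) and yields explicit formulas; the paper's reduction tends to produce vectors with smaller entries, though nothing in the sequel requires that, since all that is used later is that the $\xi_i$ are linearly independent, nonnegative, and orthogonal to $E$. Your closing parenthetical about the convention $0\notin\bN$ is unnecessary here --- the paper's own algorithm also produces vectors with zero entries, so $\bN$ clearly contains $0$ --- but the patch you sketch is sound.
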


\begin{proof}
For any basis $\xi_1$, \dots, $\xi_{n}$ of~$\bR^n$ set 
$w=(w_1,\dots,w_n):=(\scalar{\xi_1}{c},\dots,\scalar{\xi_n}{c})$.
By the hypothesis of the lemma, for the standard basis 
$\xi_1=(1,0,\dots,0)$, \dots, $\xi_n=(0,\dots,0,1)$ of $\bR^n$ there exist 
$w_i, w_j\neq 0$ of opposite signs.
We will gradually modify the standard basis  until we reach a basis such that only one 
coordinate of $w$ is nonzero. 
Below we outline the algorithm. 

\begin{itemize}
\item[1.] If there are only two nonzero entries $w_j$, $w_k$ of $w$ and $w_j+w_k=0$, then 
replace $\xi_k$ by $\xi_k+\xi_j$ and stop.
\item[2.] Let $j$ be the index such that $|w_j|=\min\{\,|w_i|: w_i\neq 0,\,i\in\{1.\dots,n\}\,\}$.
\begin{itemize}
\item[(a)] If there exists $w_k$ of sign opposite to this of $w_j$, such that  $|w_k|>|w_j|$, 
        then replace $\xi_k$ by $\xi_k+\xi_j$,
\item[(b)] otherwise if there are at least two entries $w_k$, $w_l$ of sign opposite 
         to this of $w_j$, then replace $\xi_k$ by $\xi_k+\xi_j$, 
\item[(c)] otherwise set $j:=k$ and perform step (a). 
\end{itemize}
\item[3.] Go to step 1.
\end{itemize}

After every loop of the above algorithm $w_k$ is replaced by $w_k+w_j$. Hence 
the number $\sum_{i=1}^n |w_i|$ decreases and the algorithm must terminate.
\end{proof}

We encourage the reader to apply the algorithm from the proof in a simple case, 
for example for $c=(2,3,-4)$. 

\medskip 
From now on up to the end of this section we fix a loose edge $E$. 
Let $c$ be a primitive lattice vector parallel to $E$.
Applying Lemma~\ref{Lem:base} to $c$ we find ${n-1}$~linearly independent vectors
$\xi_1$,\dots, $\xi_{n-1}\in \bN^n$ which are orthogonal to $E$.
For every monomial $\underline{x}^{\alpha}$ we set 
$\omega(\underline{x}^{\alpha}) := 
(\scalar{\xi_1}{\alpha},\,\dots,\scalar{\xi_{n-1}}{\alpha})$. 
We call this vector a {\it weight} of $\underline{x}^{\alpha}$. 
Since 
$\omega(\underline{x}^{\alpha}\underline{x}^{\beta}) =
\omega(\underline{x}^{\alpha})+\omega(\underline{x}^{\beta})$ 
for any monomials $\underline{x}^{\alpha}$, $\underline{x}^{\beta}$, 
the ring $\mathbb{K}[x_1,\dots,x_n]$ turns into a graded ring 
$\bigoplus_{w\in\mathbb{N}^{n-1}} R_{w}$,
where $R_{w}$ is spanned by monomials of weight $w$.

All monomials of $R_w$ are of the form $\underline{x}^{a+ic}$ where 
$\underline{x}^{a}$ is a fixed monomial of $R_w$ and $i$ is an integer. This shows that $R_w$ is a finite dimensional vector space over $\mathbb{K}$
since there is only a finite number of integers $i$ such that all coordinates of $a+ic$ are nonnegative. One can happen that $\dim R_w=0$. 
Take for example the weight $\omega(x_1^{\alpha_1}x_2^{\alpha_2})=3\alpha_1+2\alpha_2$.
Then there is no  monomial of weight 1, hence $R_1\subset \mathbb{K}[x_1,x_2]$ is a zero-dimensional vector space. 

Let $M\subset \mathbb{N}^{n-1}$ be the set of weights satisfying the condition: 
$z\in M$ if and only if there exists $\alpha\in\mathbb{Z}^n$ 
such that $\omega(\underline{x}^{\alpha})=z$ and $\scalar{\xi}{\alpha}\geq 0$ for all 
$\xi\in\mathbb{R}_{\geq0}^n$ which are orthogonal to $E$. 
Observe that $M$ is closed under addition. Moreover for any  $w\in\mathbb{N}^{n-1}$ such that $\dim R_w>0$ we have $w\in M$.

\begin{lem}
Let $w\in\mathbb{N}^{n-1}$ and $z\in M$. 
Assume that~$R_w$ contains two coprime monomials. 
Then $$\dim R_{w+z}=\dim R_w+ \dim R_z-1.$$\label{l1}
\end{lem}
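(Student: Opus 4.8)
\section*{Proof proposal}

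The plan is to reduce the identity to counting lattice points on the three line segments, all parallel to $c$, that carry the monomials of $R_w$, $R_z$ and $R_{w+z}$. Write $P=\{k:c_k>0\}$, $N=\{k:c_k<0\}$ and $Z=\{k:c_k=0\}$; since $c$ is parallel to a compact edge, it has a positive and a negative coordinate, so $P$ and $N$ are nonempty. By the description of $R_w$ recalled just before the lemma, its monomials are $\underline{x}^{a},\underline{x}^{a+c},\dots,\underline{x}^{a+dc}$ with $d=\dim R_w-1$, where $a$ is chosen so that $a+ic\in\bN^n$ exactly for $0\le i\le d$.

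First I would pin down the two coprime monomials, say $\underline{x}^{a+i_1c}$ and $\underline{x}^{a+i_2c}$ with $i_1<i_2$. For $k\in P$ the smaller exponent is $a_k+i_1c_k$, and coprimality forces $a_k+i_1c_k=0$; as $a_k,i_1,c_k\ge0$ with $c_k>0$, this already forces $i_1=0$, and then $a_k=0$ for all $k\in P$. Symmetrically, for $k\in N$ coprimality gives $a_k+i_2c_k=0$, so $a_k=-i_2c_k$; feasibility of the top monomial $a+dc$ (whose $k$-th entry is $(d-i_2)c_k\ge0$ with $c_k<0$) then forces $i_2=d$. Hence the coprime pair is the extreme pair $\underline{x}^a,\underline{x}^{a+dc}$, and $a_k=0$ for $k\notin N$ while $a_k=-dc_k$ for $k\in N$. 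This explicit support of $a$ is exactly what makes the later count exact.

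Next, using $z\in M$, I would choose a representative $\beta\in\bZ^n$ with $\omega(\beta)=z$ and $\scalar{\xi}{\beta}\ge0$ for every $\xi\in\bR_{\ge0}^n$ orthogonal to $E$; then $\underline{x}^{a+\beta}$ lies in the fiber of weight $w+z$. Substituting the support of $a$ into the conditions $a+\beta+ic\in\bN^n$ and $\beta+jc\in\bN^n$, and writing $L=\max_{k\in P}(-\beta_k/c_k)$ and $U=\min_{k\in N}(\beta_k/|c_k|)$, I would show that, provided $\beta_k\ge0$ for $k\in Z$, the admissible indices are $I=\bZ\cap[L,\,d+U]$ for $R_{w+z}$ and $J=\bZ\cap[L,\,U]$ for $R_z$, so that $\dim R_{w+z}=|I|$ and $\dim R_z=|J|$. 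The key structural point is that both segments share the same left endpoint $L$ (coming from $P$), while their right endpoints differ by exactly the integer $d$.

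The two consequences I would draw from $z\in M$ are: first, that $e_k$ is nonnegative and orthogonal to $c$ for every $k\in Z$, giving $\beta_k\ge0$ there (so neither $I$ nor $J$ is spuriously empty); and second, that for $k\in P,\ l\in N$ the vector $\tfrac1{c_k}e_k+\tfrac1{|c_l|}e_l$ is nonnegative and orthogonal to $c$, which yields $\beta_k/c_k+\beta_l/|c_l|\ge0$ and hence $U\ge L$. The main obstacle, and the precise reason $z\in M$ is required, is the degenerate case $\dim R_z=0$: here $[L,U]$ contains no integer, yet $U\ge L$ forces $\lceil L\rceil\le\lfloor U\rfloor+1$, so raising the right endpoint by $d$ recovers exactly $d$ integers. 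Counting integers in each segment as $\lfloor\text{right}\rfloor-\lceil L\rceil+1$ then gives $|I|=d+|J|$ uniformly, which is the asserted identity $\dim R_{w+z}=\dim R_w+\dim R_z-1$.
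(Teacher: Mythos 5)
Your proof is correct and follows essentially the same route as the paper: both reduce the statement to counting lattice points on the parallel segments $l_w$, $l_z$, $l_{w+z}$, use coprimality to place the coprime pair at the two ends of $l_w$ (so that $a_k=0$ off the negative support of $c$), and invoke $z\in M$ to obtain exactly the inequality ($U\ge L$ in your notation) that keeps the count exact. The only real difference is presentational: your uniform floor/ceiling count of $\bZ\cap[L,U]$ versus $\bZ\cap[L,d+U]$ absorbs the degenerate case $\dim R_z=0$, which the paper instead handles by a separate normalization of a representative of the empty fiber $l_z$.
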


\begin{proof}
 For any $v\in\mathbb{N}^{n-1}$ 
 the dimension of the vector space $R_v$ is equal to the number of 
 monomials ${\underline x}^{\alpha}$ such that $\omega({\underline x}^{\alpha})=v$, hence 
 is equal to the number of lattice points in the set 
$$l_v = \{\,\alpha\in \bR_{\geq 0}^n : 
(\scalar{\xi_1}{\alpha},\dots, \scalar{\xi_{n-1}}{\alpha})=v\,\} . $$
Notice that $l_v$ is the intersection of the straight line 
$\{a+tc: t\in \mathbb{R}\}$, where $\omega({\underline x}^{a})=v$ and 
$c$ is a primitive lattice vector parallel to the edge $E$, 
with $\bR_{\geq 0}^n$.

By the assumption, $R_w$ contains two coprime monomials 
${\underline x}^{a}$ and ${\underline x}^{b}$. 
Hence $\min(a_1,b_1)=\dots=\min(a_n,b_n)=0$ which yields the partition 
of $\{1,\dots,n\}$ to three sets 
$A=\{i\in\{1,\dots,n\}: a_i=0, b_i>0\}$, 
$B=\{i\in\{1,\dots,n\}: a_i>0, b_i=0\}$ and
$C=\{i\in\{1,\dots,n\}: a_i=0, b_i=0\}$. 
Since $A$ and $B$ are nonempty,  $a$ and $b$ 
are the endpoints of the segment $l_w$. 
We may assume without loss of generality that the vector $c$ is pointed 
in the direction of $b-a$. Then if  $\dim R_w=r+1$, then 
the lattice points  of $l_w$ are $a$, $a+c$, \dots, $b=a+rc$.
 
If the lattice points of $l_z$  are 
$d$, $d+c$, \dots, $d+s c$ then the lattice points of 
$l_{w+z}$,  are $a+d$, $a+d+c$, \dots, $a+d+(r+s)c$.
(see Figure 5).  This ends the proof in the case $\dim R_z>0$.

Now suppose that $\dim R_z=0$. Let ${\underline x}^{d}$ be any monomial  
with integer exponents such that  $\omega({\underline x}^{d})=z$.
Replacing this monomial by ${\underline x}^{d+kc}$, for suitably chosen integer $k$, 
we may assume that  $d_{i_0}<0$ for some  $i_0\in A$ and 
$d_i+c_i\geq 0$  for all $i\in A$. By the assumption that $z\in M$ we get inequalities
$d_{i_0}/c_{i_0}-d_j/c_j\geq 0$ for $j\in B$. Hence $d_j\geq 0$ for all $j\in B$. 
By the same argument we have  $d_j\geq0$ for $j\in C$. Notice that 
$d_{j}+c_j<0$ for at least one $j\in B$, otherwise all exponents of ${\underline x}^{d+c}$ 
would be nonnegative. All this information  implies that  $a+d+c$, $a+d+2c$, \dots, $a+d+rc$ are the 
only lattice points of $l_{w+z}$ which finishes the proof. 

\end{proof}

\begin{center}
\begin{tikzpicture}[
     scale = 1,
     foreground/.style = {  thick },
     background/.style = { dashed },
     line join=round, line cap=round
   ]
   \draw[foreground,->] (0,0)--+(6,0);
   \draw[foreground,->] (0,0)--+(0,3.5);
 \draw[thick] (5,0)--(4.5,0.25);
   \draw[dashed] (4.5,0.25)--(1.25,1.875);
   \draw[thick] (1.25,1.875)--(0,2.5);
   \draw (1,-0.15) node[below] {$ $};
   \draw (-0.15,1) node[left] {$ $};
   \foreach \x in{} {
    \foreach \y in{}{
     \draw[fill, opacity=0.9]  (\x,\y) circle (0.5pt);
     }
    }
\node[draw,circle,inner sep=1pt,fill=black] at (0,2.5) {};
\node[draw,circle,inner sep=1pt,fill=black] at (1,2) {};
  \node[draw,circle,inner sep=1pt,fill=black] at (5,0) {};
\node at (0.3,2.6) {$a$}; 
\node at (1.5,2.1) {$a+c$}; 
   \node at (5.4,0.2) {$a+rc$};   
  
  [
     scale = 1,
     foreground/.style = {  thick },
     background/.style = { dashed },
     line join=round, line cap=round
   ]
  \draw[foreground,->] (6.5,0)--+(4.5,0);
   \draw[foreground,->] (6.5,0)--+(0,2.75);
   \draw[thick] (10,0)--(9.5,0.25);
   \draw[dashed] (9.5,0.25)--(6.5,1.75);
   \draw[thick] (6.5,1.75)--(8.25,0.875);
   \draw (1,-0.15) node[below] {$ $};
   \draw (-0.15,1) node[left] {$ $};
   \foreach \x in{} {
    \foreach \y in{}{
     \draw[fill, opacity=0.9]  (\x,\y) circle (0.5pt);
     }
    }
\node[draw,circle,inner sep=1pt,fill=black] at (7,1.5) {};
\node[draw,circle,inner sep=1pt,fill=black] at (8,1) {};
\node[draw,circle,inner sep=1pt,fill=black] at (10,0) {};
\node at (7.3,1.7) {$d$};
 \node at (8.5,1.2) {$d+c$};
   \node at (10.4,0.3) {$d+sc$}; 
   
    \end{tikzpicture}
  \end{center}
  
  \begin{center}
  \begin{tikzpicture}
    [
       scale = 1,
     foreground/.style = {  thick },
     background/.style = { dashed },
     line join=round, line cap=round
   ]
   \draw[foreground,->] (0,0)--+(10,0);
   \draw[foreground,->] (0,0)--+(0,5);
   \draw[thick] (8.5,0)--(8,0.25);
   \draw[dashed] (8,0.25)--(5,1.75);
    \draw[dashed] (6.75,0.875)--(1.75,3.375);
    \draw[thick] (0,4.25)--(1.75,3.375);
   \draw[thick] (6.75,0.875)--(5,1.75);
   \draw (1,-0.15) node[below] {$ $};
   \draw (-0.15,1) node[left] {$ $};
   \foreach \x in{} {
    \foreach \y in{}{
     \draw[fill, opacity=0.9]  (\x,\y) circle (0.5pt);
     }
    }
    \node[draw,circle,inner sep=1pt,fill=black] at (1.5,3.5) {};
 \node[draw,circle,inner sep=1pt,fill=black] at (0.5,4) {};
\node[draw,circle,inner sep=1pt,fill=black] at (5.5,1.5) {};
  \node[draw,circle,inner sep=1pt,fill=black] at (6.5,1) {};
    \node[draw,circle,inner sep=1pt,fill=black] at (8.5,0) {};
   \node at (1.1,4.2) {$a+d$}; 
    \node at (2.3,3.7) {$a+d+c$};
   \node at (9.5,0.3) {$a+d+(r+s)c$};
   \node [below=0.6cm, align=flush center,text width=6cm] at (4.5,0)    {Fig.\ 5};
  \end{tikzpicture}
 \end{center}

\begin{lem} \label{l3}
Let $G\in R_w$ and $H\in R_z$ be coprime polynomials. 
If $G$ is not divisible by any monomial  then for every $i\in M$ 
$$ G R_{z+i} + H R_{w+i} = R_{w+z+i}.
$$
\end{lem}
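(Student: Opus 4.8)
My plan is to interpret the sum $GR_{z+i}+HR_{w+i}$ as the image of the $\bK$-linear multiplication map
$$\Phi\colon R_{z+i}\oplus R_{w+i}\to R_{w+z+i},\qquad \Phi(P,Q)=GP+HQ .$$
This is well defined because $R_uR_v\subseteq R_{u+v}$ for the weight grading, so the inclusion $GR_{z+i}+HR_{w+i}\subseteq R_{w+z+i}$ is automatic and the content of the lemma is precisely that $\Phi$ is onto. I would establish surjectivity by computing all the relevant dimensions exactly and invoking rank--nullity: it suffices to show that $\dim\ker\Phi=\dim R_{z+i}+\dim R_{w+i}-\dim R_{w+z+i}$.

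First I would compute the kernel. If $GP+HQ=0$ then $GP=-HQ$ in the unique factorization domain $\bK[x_1,\dots,x_n]$, and since $G$, $H$ are coprime it follows that $H\mid P$ and $G\mid Q$; writing $P=HT$ forces $Q=-GT$ for one and the same polynomial $T$. As $P$ and $H$ are homogeneous of weights $z+i$ and $z$, their quotient $T$ is homogeneous of weight $i$, that is $T\in R_i$, and conversely every $T\in R_i$ gives a kernel element. Hence $T\mapsto(HT,-GT)$ is an isomorphism $R_i\xrightarrow{\sim}\ker\Phi$ (injective because $H\neq0$), so $\dim\ker\Phi=\dim R_i$.

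Next I would compute the three dimensions with Lemma~\ref{l1}. To apply it I need $R_w$ to contain two coprime monomials, and this is exactly where the hypothesis that $G$ is divisible by no monomial is used. All monomials of $R_w$ lie on the line $l_w$, so they are $\underline x^{a},\underline x^{a+c},\dots,\underline x^{a+rc}$; the exponent of the gcd of the monomials actually occurring in $G$ is, coordinate by coordinate, the minimum of $a_\ell+jc_\ell$ over the occurring indices $j$, and this minimum is attained at the smallest or the largest such $j$ according to the sign of $c_\ell$. Consequently the gcd of the two extreme monomials of $G$ is already trivial, so these two monomials of $R_w$ are coprime. Furthermore $z\in M$ because $H\neq0$ gives $\dim R_z>0$, and since $M$ is closed under addition and $i\in M$ we also have $z+i\in M$. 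Lemma~\ref{l1} now yields
$$\dim R_{w+i}=\dim R_w+\dim R_i-1,\qquad \dim R_{w+z+i}=\dim R_w+\dim R_{z+i}-1 .$$

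Combining these,
\begin{align*}
\dim(\operatorname{im}\Phi)&=\dim R_{z+i}+\dim R_{w+i}-\dim\ker\Phi\\
&=\dim R_{z+i}+(\dim R_w+\dim R_i-1)-\dim R_i\\
&=\dim R_w+\dim R_{z+i}-1=\dim R_{w+z+i},
\end{align*}
and since $\operatorname{im}\Phi\subseteq R_{w+z+i}$ the equality of dimensions forces $\operatorname{im}\Phi=R_{w+z+i}$, which is the assertion. The step needing the most care is the applicability of Lemma~\ref{l1}: one must verify honestly that ``$G$ is divisible by no monomial'' produces two coprime monomials in $R_w$, and one should check that the whole count remains valid in the degenerate case $\dim R_i=0$, where it simply reads $\dim\ker\Phi=0$ and every formula above still holds.
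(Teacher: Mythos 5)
Your proof is correct and follows essentially the same route as the paper: the paper packages your multiplication map and its kernel into the exact sequence $0\to R_i\to R_{z+i}\times R_{w+i}\to R_{w+z+i}$ and performs the identical dimension count via Lemma~\ref{l1}. The details you supply --- the kernel identification via coprimality in a UFD, the verification that the two extreme monomials of $G$ are coprime, and the observation that $z+i\in M$ --- are precisely the steps the paper leaves implicit.
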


\begin{proof}
Consider the following exact sequence 
$$ 0\longrightarrow{} R_i\stackrel{\Phi}{\longrightarrow} 
R_{z+i}\times R_{w+i} \stackrel{\Psi}{\longrightarrow} R_{w+z+i},
$$ 
where 
$\Phi\colon \eta \mapsto (\eta H,-\eta G)$ and $\Psi\colon (\psi,\varphi)\mapsto \psi G+\varphi H$. 
The assumption on $G$ implies that $R_w$ satisfies the hypothesis of Lemma~\ref{l1}.
Hence we get 
$\dim {\rm Im}\,\Psi = \dim R_{z+i}+\dim R_{w+i} - \dim R_i = 
\dim R_{z+i}+ (\dim R_w+\dim R_i - 1) - \dim R_i = 
\dim R_w+\dim R_{z+i} -1 = \dim R_{w+z+i}$, 
which implies that $\Psi$ is surjective.
\end{proof}

\begin{proof}[Proof of Theorem \ref{t1}]
Since all monomials of $f|_E$ have the same weight,
it follows from the the equality $f|_E=GH$ that $G\in R_{w}$ and $H\in R_{z}$ 
for some $w$, $z\in {\mathbb N}^{n-1}$.
Let ${\underline x}^{\alpha}$ be any monomial which appears 
with nonzero coefficient in the power series~$f$ 
and ${\underline x}^{\alpha_0}$ be a fixed monomial of $f|_E$. 
By Lemma~\ref{Lem:edge} we have $\scalar{\xi}{\alpha} \geq \scalar{\xi}{\alpha_0}$
for every $\xi\in\mathbb{R}_{\geq0}^n$ which is orthogonal to $E$. 
This yields $\omega({\underline x}^{\alpha-\alpha_0})\in M$. Since 
$\omega({\underline x}^{\alpha_0})=w+z$, we get 
$f=\sum_{i\in M} f_{w+z+i}$, where $f_{w+z+i}\in R_{w+z+i}$.

Let $g_w:=G$ and $h_{z}:=H$.  Then $f_{w+z}=g_{w}h_{z}$. 
Let us set  $M$ in degree-lexicographic order.
Using Lemma~\ref{l3} we can find recursively 
 $g_{w+i}\in R_{w+i}$ and $h_{z+i}\in R_{z+i}$ 
such that 
$$g_{w}h_{z+i}+h_{z}g_{w+i}=f_{w+z+i}-F_i,
$$ 
where 
$$F_i=\sum_{\stackrel{k+l=i,}{k,l<i}}g_{w+k}h_{z+l}.$$ 
If $g:=\sum_{i\in M} g_{w+i}$ 
and $h:=\sum_{i\in M} h_{z+i}$, then $f=gh$.

Let $\xi=\xi_1+\dots+\xi_{n-1}$. Then for  $E_1:=\Delta(g)^{\xi}$ and 
$E_2:=\Delta(h)^{\xi}$ we have $g|_{E_1}=g_{w}$, $h|_{E_2}=h_{z}$ 
and $E=E_1+E_2$. 
\end{proof}

\begin{Uw}
The assumption of Theorem \ref{t1} that  $G$ is not divisible by any variable  cannot be omitted. 
Consider the power series $f=(x_3^2+x_1x_2)(x_3+x_1 x_2)$. 
Its Newton polyhedron has a loose edge $E$ with endpoints $(1,1,1)$, $(2,2,0)$ and 
$f|_E=x_1x_2(x_3+x_1x_2)$. The irreducible factors of $f$ are $f_1=x_3^2+x_1x_2$ and 
$f_2=x_3+x_1 x_2$. Hence $f$ cannot be a product of power series $g$, $h$ such that 
$g|_{E_1}=x_2(x_3+x_1x_2)$ and $h|_{E_2}=x_1$.
\end{Uw}

\begin{proof}[Proof of Corollary \ref{w1}]
Assume that $a=(a_1,\dots, a_n)$, $b=(b_1,\dots,b_n)$ 
are the ends of a loose edge $E$ of the Newton polyhedron $\Delta(f)$. 
Since $\Delta(f)$ has at least three vertices,  Lemma \ref{lc} implies that 
$c=(\min(a_1,b_1),\dots,\min(a_n,b_n))$ has at least one nonzero coordinate.
The monomials $\underline{x}^{a}$ and $\underline{x}^{b}$ 
appear in the polynomial $f|_E$ with nonzero coefficients and their greatest common divisor equals
$\underline{x}^{c}$.
Thus $\underline{x}^{-c}\cdot f|_E$ is not divisible  by any variable, 
so $\underline{x}^{-c}\cdot f|_E$ and $\underline{x}^c$ are relatively prime.
Using Theorem \ref{t1} we obtain that $f$ is not irreducible.
\end{proof}

%


\begin{lem} \label{l4}
Let $G\in R_w$ and $H_j\in R_{z_j}$ for $j=1,2$. 
Assume that for every $i\in M$ 
$$ G R_{z_j+i} + H_j R_{w+i} = R_{w+z_j+i}
$$
for $j=1,2$. Then for every $i\in M$ 
$$ G R_{z_1+z_2+i} + H_1H_2 R_{w+i} = R_{w+z_1+z_2+i}.
$$
\end{lem}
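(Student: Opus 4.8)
The goal is to promote the surjectivity property $G R_{z_j+i} + H_j R_{w+i} = R_{w+z_j+i}$ from the two individual factors $H_1,H_2$ to their product $H_1H_2$. The natural strategy is to compose the two hypotheses in two stages, treating $z_1$ and $z_2$ one at a time.

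First I would fix an arbitrary $i\in M$ and an arbitrary target element $\rho\in R_{w+z_1+z_2+i}$. The plan is to apply the hypothesis for $j=1$ with the index $z_2+i$ in place of $i$; this is legitimate precisely because $M$ is closed under addition (as noted right before Lemma~\ref{l1}), so $z_2+i\in M$. That application gives $G R_{z_1+(z_2+i)} + H_1 R_{w+(z_2+i)} = R_{w+z_1+z_2+i}$, so I can write $\rho = G\psi + H_1\varphi$ with $\psi\in R_{z_1+z_2+i}$ and $\varphi\in R_{w+z_2+i}$. The term $G\psi$ already lies in $G R_{z_1+z_2+i}$, so the only thing left is to absorb the remainder $H_1\varphi$ into the desired sum.

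Next I would attack $\varphi\in R_{w+z_2+i}$ using the hypothesis for $j=2$, again with $i$ replaced by the admissible index $i$ itself (no shift is needed here, since $R_{w+z_2+i}=R_{w+(z_2+i)}$ is exactly the codomain in the $j=2$ relation when the free index is $z_2+i$—more directly, apply $G R_{z_2+i}+H_2 R_{w+i}=R_{w+z_2+i}$). This yields $\varphi = G\sigma + H_2\tau$ with $\sigma\in R_{z_2+i}$ and $\tau\in R_{w+i}$. Substituting back gives
$$
\rho = G\psi + H_1(G\sigma + H_2\tau) = G(\psi + H_1\sigma) + H_1H_2\,\tau.
$$
Here $\psi + H_1\sigma\in R_{z_1+z_2+i}$ because $H_1\in R_{z_1}$ forces $H_1\sigma\in R_{z_1+z_2+i}$ and $R$ is a graded ring, so the two summands share the same weight; likewise $H_1H_2\tau\in R_{w+z_1+z_2+i}$. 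Therefore $\rho\in G R_{z_1+z_2+i} + H_1H_2 R_{w+i}$, and since $\rho$ was arbitrary we obtain the inclusion $R_{w+z_1+z_2+i}\subseteq G R_{z_1+z_2+i}+H_1H_2 R_{w+i}$; the reverse inclusion is immediate from the grading, giving equality.

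I do not expect a serious obstacle: the argument is a two-step telescoping that uses only the additive closure of $M$ (to justify the shifted index $z_2+i$ in the first application) and the grading (to keep track of weights when recombining terms). The one point requiring a little care is bookkeeping the weights so that $\psi+H_1\sigma$ genuinely lands in a single graded piece $R_{z_1+z_2+i}$ rather than a sum of several, but this is guaranteed because every object produced by the hypotheses is homogeneous of the prescribed weight. An alternative, if one prefers an inductive flavour, is to iterate this lemma to extend the property to any finite product of the $H_j$, but for the stated two-factor case the direct substitution above is cleanest.
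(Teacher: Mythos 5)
Your proof is correct and is essentially the paper's argument: the paper performs the same two-stage substitution at the level of subspaces (inserting $GH_1R_{z_2+i}$, collecting $H_1(GR_{z_2+i}+H_2R_{w+i})=H_1R_{w+z_2+i}$ via the $j=2$ hypothesis, then applying the $j=1$ hypothesis at the shifted index $z_2+i$), which is exactly your element-level decomposition read in the opposite direction. Both versions rely on the same small observation you make explicit, namely that $z_2+i\in M$.
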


\begin{proof} By assumptions of the lemma we get
\begin{eqnarray*}
&\phantom{=}& G R_{z_1+z_2+i} + H_1H_2 R_{w+i} = 
G (R_{z_1+z_2+i}+H_1 R_{z_2+i}) + H_1H_2 R_{w+i}  \\
&=&  G R_{z_1+z_2+i} + H_1 ( G R_{z_2+i} + H_2 R_{w+i} ) \\ 
&=& G R_{z_1+z_2+i} + H_1R_{w+z_2+i} = R_{w+z_1+z_2+i} \,.
\end{eqnarray*}
\end{proof}

Assume that $E$ is a descendant loose edge. By definition, there exists  a lattice vector  
$c=(c_1,\dots,c_n)$ parallel to $E$ such that $c_i\geq 0$ for $i=1,\dots,n-1$ and $c_n<0$.
Let $R_w':=R_w\cap \bK[x_1,\dots,x_{n-1}]$ for $w\in\bN^{n-1}$. Since every line parallel to
$E$ intersects $\bR^{n-1}\times \{0\}$ transversely, the dimension of  $R_w'$ is 0 or 1. 

We claim that for every $w\in M$, $z\in \mathbb{N}^{n-1}$, $0\neq H\in R_z'$ and $\psi\in R_{w+z}'$  
one has $\psi/H\in R_w'$.
To prove this claim it is enough to consider $H=\underline{x}^{\alpha}\in  R_z'$ 
and $\psi=\underline{x}^{\beta}\in R_{w+z}'$. Then $\omega(\underline{x}^{\beta-\alpha})=w$.
Denote $v_i:=e_i-\frac{c_i}{c_n}e_n\in\mathbb{R}_{\geq 0}^n$, where $e_1,\dots,e_n$ is the standard basis of $\mathbb{R}^n$. Since every vector $v_i$ is orthogonal to  $E$ and $w\in M$, we have $0\leq\langle v_i,\beta-\alpha\rangle = \beta_i-\alpha_i$ for $i=1,\dots,n-1$. 
Thus  $\psi/H=\underline{x}^{\beta-\alpha}$ is a monomial with nonnegative exponents. 

\begin{lem} \label{l5}
Let $G\in R_w$ and $H\in R_z$ be coprime polynomials. 
If $G$ is monic with respect to $x_n$,
 then for every $i\in M$ 
\begin{equation} \label{eq2}
G R_{z+i} + H R_{w+i} = R_{w+z+i}.
\end{equation}
\end{lem}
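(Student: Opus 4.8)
The plan is to reduce this statement to the already-established Lemma \ref{l3}, whose hypothesis is that $G$ is not divisible by any \emph{monomial} (equivalently, not divisible by any variable). The subtlety is that Lemma \ref{l5} assumes instead that $G$ is monic in $x_n$, and these two conditions are not the same: a monic polynomial in $x_n$ could still be divisible by some other variable $x_i$ with $i<n$. So the first thing I would do is extract, from the monicity hypothesis together with the fact that $E$ is descendant (so $c_n<0$), the structural information about $G$ that makes the argument run. Since $G\in R_w$ and $G$ is monic in $x_n$, the pure power $x_n^{\deg_{x_n} G}$ appears in $G$; combined with the descending direction of $c$, this should force the factor of $G$ that \emph{is} a monomial to lie among $x_1,\dots,x_{n-1}$ only, and in fact to be controllable via the machinery built just before the lemma, namely the division property $\psi/H\in R_w'$ for $H\in R_z'$.

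The key steps, in order, would be: (i) Write $G=\underline{x}^{\gamma} G_0$ where $\underline{x}^{\gamma}$ is the largest monomial dividing $G$ and $G_0$ is not divisible by any variable. Because $G$ is monic in $x_n$, the exponent $\gamma$ has $\gamma_n=0$, so $\underline{x}^{\gamma}\in R_{w'}'$ for the corresponding weight $w'$, i.e. $\underline{x}^{\gamma}$ is a monomial in $x_1,\dots,x_{n-1}$ lying in the one-dimensional graded pieces governed by the claim preceding the lemma. (ii) Apply Lemma \ref{l3} to the coprime pair $G_0$, $H$ (they are coprime since $G,H$ are coprime and $G_0\mid G$), obtaining $G_0 R_{z+i}+H R_{w_0+i}=R_{w_0+z+i}$ for all $i\in M$, where $w_0$ is the weight of $G_0$. (iii) Multiply this identity through by the monomial $\underline{x}^{\gamma}$ and use the division property $\psi/\underline{x}^{\gamma}\in R'$ to convert the equality at weight $w_0$ into the desired equality at weight $w=w_0+w'$; concretely, multiplication by $\underline{x}^{\gamma}$ is an isomorphism $R_{w_0+i}\to R_{w+i}$ for $i\in M$ (injective always, surjective by the division claim since $\underline{x}^{\gamma}\in R'$ and $w'\in M$), which transports the surjectivity of $\Psi$ from the $G_0$-setting to the $G$-setting.

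The main obstacle I expect is step (iii): verifying that multiplication by the monomial $\underline{x}^{\gamma}$ really induces a bijection on the relevant graded pieces, so that $G R_{z+i}+H R_{w+i}$ fills out all of $R_{w+z+i}$ rather than only a submodule. This is exactly where the descendant hypothesis on $E$ is essential, since it is what guarantees (via the preceding claim that $\psi/H\in R_w'$ whenever $H\in R_z'$ and $w\in M$) that dividing by $\underline{x}^{\gamma}$ keeps exponents nonnegative and hence does not drop us out of the nonnegative orthant. I would therefore be careful to check that $w_0\in M$ and $w'\in M$, so that the weights stay in the monoid $M$ throughout and every graded piece invoked is the correct one; once that bookkeeping is in place, the surjectivity of $\Psi$ transfers immediately and the proof closes. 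An alternative that avoids the factorization entirely would be to rerun the exact-sequence dimension count of Lemma \ref{l3} directly, replacing the ``not divisible by any variable'' input to Lemma \ref{l1} with the observation that monicity in $x_n$ already supplies two coprime monomials in $R_w$ (the top $x_n$-power and the monomial witnessing $G_0$'s coprimality with it); I would fall back on this if the monomial-transport argument turns out to require more care than expected.
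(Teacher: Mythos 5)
Your reduction rests on a structural claim that is false, and in fact backwards. If $G$ is monic in $x_n$ of degree $d$, then the monomial $x_n^d$ occurs in $G$ with coefficient $1$, so the largest monomial $\underline{x}^{\gamma}$ dividing $G$ must divide $x_n^d$; hence $\gamma_1=\dots=\gamma_{n-1}=0$ and $\underline{x}^{\gamma}=x_n^{e}$ for some $e\geq 0$ --- exactly the opposite of your assertion that $\gamma_n=0$. Note that if $\gamma_n$ were $0$ then $\gamma=0$ and Lemma~\ref{l3} would apply verbatim; the only genuinely new case of Lemma~\ref{l5} is the one where $x_n$ divides $G$, e.g.\ $G=x_3^2+x_1x_2x_3=x_3(x_3+x_1x_2)$ for $c=(1,1,-1)$, and this is precisely the case your argument cannot reach. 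Step (iii) then collapses: multiplication by $x_n^{e}$ is not surjective onto $R_{w+i}$, because the descendant direction ($c_n<0$, $c_j\geq 0$ for $j<n$) allows $R_{\omega(x_n^e)}$ to have dimension $\geq 2$ (in the example above $R_{\omega(x_3)}$ is spanned by $x_3$ and $x_1x_2$), so $x_n^{e}R_{w_0+i}$ is in general a proper subspace of $R_{w+i}$ and your identity only generates that subspace. The division property $\psi/H\in R_w'$ cannot rescue this: it concerns only the pieces $R_{\bullet}'\subset\bK[x_1,\dots,x_{n-1}]$ and says nothing about divisibility of arbitrary elements of $R_{w+i}$ by a monomial.

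Your fallback has the same defect: monicity does not supply two coprime monomials in $R_w$. For $c=(1,1,-2)$ the piece $R_{\omega(x_3)}$ is spanned by $x_3$ alone, and $R_{\omega(x_3^3)}$ is spanned by $x_3^3$ and $x_1x_2x_3$, whose gcd is $x_3$; a monomial of $G$ coprime to $x_n^{d}$ exists if and only if $x_n\nmid G$, which is again the case already covered by Lemma~\ref{l3}. The idea you are missing is the paper's treatment of the residual case $G=x_n$ with $H\in R_z'$: write any $F\in R_{w+z+i}$ as $F=x_n\phi+\psi$ with $\psi$ independent of $x_n$, observe that $\phi\in R_{z+i}$ and $\psi=H\cdot(\psi/H)$ with $\psi/H\in R_{w+i}'$ by the division claim, and then obtain the general statement by combining this special case with Lemma~\ref{l3} via the multiplicativity of Lemma~\ref{l4}.
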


\begin{proof} First, we prove~(\ref{eq2}) in the special case
$G=x_n$ and $H\in R_z'$. 

Every $F\in R_{w+z+i}$ can be written in the form 
$F=x_n\phi + \psi$ where $\phi, \psi$ are polynomials and 
$\psi$ does not depend on $x_n$. Then 
$\phi\in R_{z+i}$ and $\psi=H\psi'$ where $\psi' = \psi/H\in R_{w+i}'$.
This gives~(\ref{eq2}) in the special case. 
All remaining cases follow from Lemma~\ref{l3} and Lemma~\ref{l4}. 
\end{proof}

\begin{proof}[Proof of Theorem \ref{t43}]
Proceeding as in the proof of Theorem \ref{t1}, but using Lemma~\ref{l5} instead of Lemma~\ref{l3}
we obtain $\overline{g},\overline{h}\in\bK[\![x_1,\dots,x_{n-1},x_n]\!]$ such that 
$f=\overline{g}\overline{h}$, $\overline{g}|_{E_1}=G$ and $\overline{h}|_{E_2}=H$ 
for some segments $E_1,E_2$, where $E=E_1+E_2$. 
The assumptions that the loose edge $E$ is descendant and the polynomial $G$  
is monic with respect to  $x_{n}$ imply that the Newton polyhedron of $\overline{g}$ 
has a~vertex $(0,\dots,0,d)$ for some positive integer $d$.
Therefore $\overline{g}(0,\dots,0,x_n)=v x_n^d$ for some $v\in\bK[\![x_n]\!]$ such that $v(0)\ne 0$. 
It means that $\overline{g}$ fulfills assumptions of the Weierstrass Preparation Theorem, which implies that $\overline{g}=u \hat{g}$, 
where $u$ is a power series such that $u(0)\ne 0$
and $\hat{g}\in \bK[\![x_1,\dots,x_{n-1}]\!][x_n]$  is a Weierstrass polynomial. 
Putting $g=\hat{g}$ and $h=u\overline{h}$ we get $f=gh$.  
Since we can also obtain $h$ using the polynomial division of $f$ by $g$
in the ring $\bK[\![x_1,\dots,x_{n-1}]\!][x_n]$,
we conclude that $h$ is a polynomial with respect  to~$x_n$. 
\end{proof}

\section{Relation with known results}
\medskip
Corollary~\ref{w2}  generalizes to $n>2$ variables the following well-known fact.

\begin{Tw}
Assume that a power series $f\in \bC[\![x,y]\!]$ written as a sum of homogeneous polynomials 
$f=f_d+f_{d+1}+\dots$, where the subindex means the degree,  is irreducible. 
Then $f_d$ is a power of a linear form.
\end{Tw}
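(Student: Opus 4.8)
The plan is to derive the final theorem as a direct specialization of Corollary~\ref{w2} to the two-variable case. First I would observe that a power series $f=f_d+f_{d+1}+\cdots\in\bC[\![x,y]\!]$, where $f_j$ is homogeneous of degree $j$ and $f_d\neq 0$, has a Newton polyhedron $\Delta(f)$ whose unique compact face is the edge $E$ lying on the line $\alpha_1+\alpha_2=d$. Indeed, the lowest-degree monomials occupying $\Delta(f)$ are exactly those in $f_d$, and since we are in the plane every compact edge is loose (as noted in the text and illustrated in Figure~4). Thus $E$ is a loose edge of $\Delta(f)$ and, by construction, $f|_E=f_d$.

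Next I would invoke Corollary~\ref{w2} directly. Since $f$ is irreducible and $\Delta(f)$ has a loose edge $E$, the corollary tells us that over the algebraically closed field $\bC$ we have $f|_E=(a\underline x^{\alpha}+b\underline x^{\beta})^k$ with $\alpha-\beta$ a primitive lattice vector. It remains only to translate this into the claim that $f_d$ is a power of a linear form. Because $f|_E=f_d$ is homogeneous of degree $d$ in the two variables $x,y$, both monomials $\underline x^{\alpha}=x^{\alpha_1}y^{\alpha_2}$ and $\underline x^{\beta}=x^{\beta_1}y^{\beta_2}$ must have total degree equal to $d/k$; writing $\alpha=(\alpha_1,\alpha_2)$ and $\beta=(\beta_1,\beta_2)$ we have $\alpha_1+\alpha_2=\beta_1+\beta_2$. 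Combined with primitivity of $\alpha-\beta=(\alpha_1-\beta_1,\,\alpha_2-\beta_2)=(\alpha_1-\beta_1,\,-(\alpha_1-\beta_1))$, this forces $\alpha_1-\beta_1=\pm1$, so the two exponent vectors are adjacent lattice points on the diagonal. After factoring out the common monomial $x^{\min(\alpha_1,\beta_1)}y^{\min(\alpha_2,\beta_2)}$, the binomial $a\underline x^{\alpha}+b\underline x^{\beta}$ becomes $\underline x^{\gamma}(ax+by)$ or $\underline x^{\gamma}(ay+bx)$ for some monomial $\underline x^{\gamma}$; homogeneity of degree $d$ then forces $\gamma=0$, leaving $a\underline x^{\alpha}+b\underline x^{\beta}=ax+by$ (up to relabeling), a linear form. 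Hence $f_d=(ax+by)^k$ is a power of a linear form.

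The only genuine subtlety, and the step I would treat most carefully, is the passage from the abstract statement of Corollary~\ref{w2} back to the concrete homogeneous setting: one must check that the primitive binomial furnished by the corollary, which a priori could involve a nontrivial common monomial factor $\underline x^{\gamma}$, actually reduces to an honest linear form once homogeneity is imposed. The key point is that $f_d$, being homogeneous, cannot be divisible by any variable to positive power unless that variable divides every term of $f_d$; but then the corresponding coordinate of both $\alpha$ and $\beta$ would be positive, contradicting that the binomial base, after the stated normalization in Corollary~\ref{w2}, has $\alpha-\beta$ primitive and produces the minimal-degree face. I would verify that this rules out $\gamma\neq 0$, completing the identification $f_d=(ax+by)^k$.

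Everything else is routine: the identification of $\Delta(f)$'s unique compact face with the degree-$d$ diagonal, the looseness of any plane compact edge, and the observation $f|_E=f_d$ are all immediate from the definitions recalled in the introduction. Thus the proof is essentially a dictionary translation, with the sole care needed in reading off "power of a linear form" from the algebraically closed conclusion of Corollary~\ref{w2}.
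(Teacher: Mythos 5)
The paper itself offers no proof of this theorem (it is quoted as a well-known fact that Corollary~\ref{w2} generalizes), so your plan of deducing it from Corollary~\ref{w2} is in the intended spirit. However, your first step is false as stated, and it hides the one case that actually needs work. You claim that the unique compact face of $\Delta(f)$ is an edge lying on the line $\alpha_1+\alpha_2=d$ and that $f|_E=f_d$. The Newton polygon is shaped by \emph{all} monomials of $f$, not only the lowest-degree ones: for $f=x^2+y^3$ (irreducible) the unique compact edge joins $(2,0)$ to $(0,3)$, does not lie on the line $\alpha_1+\alpha_2=2$, and $f|_E=x^2+y^3\neq f_2=x^2$. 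The face that does carry $f_d$ is $\Delta(f)^{(1,1)}=\mathrm{conv}(\mathrm{supp}\,f_d)$, and this is an edge only when $f_d$ is not a monomial. In that subcase your appeal to Corollary~\ref{w2} (applied to the loose edge $\Delta(f)^{(1,1)}$) is legitimate. But when $f_d=c\,x^ay^b$ is a monomial, Corollary~\ref{w2} tells you nothing about $f_d$, and you must argue separately that $a=0$ or $b=0$: if $a,b>0$ then $(a,b)$ is a vertex of $\Delta(f)$, while irreducibility (for $d\geq 2$) forces $f$ to be divisible by neither variable, producing additional vertices on each coordinate axis; with at least three vertices Corollary~\ref{w1} makes $f$ reducible. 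Your proposal never addresses this case.

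The second gap is the claim that ``homogeneity of degree $d$ forces $\gamma=0$'': this is simply not true, since $\bigl(\underline x^{\gamma}(ax+by)\bigr)^k$ is homogeneous of degree $k(\gamma_1+\gamma_2+1)$ for every $\gamma$, so homogeneity imposes no constraint on $\gamma$. Your fallback argument in the ``subtlety'' paragraph also does not work: both coordinates of $\alpha$ and $\beta$ being positive is perfectly compatible with $\alpha-\beta$ being primitive. The correct reason $\gamma=0$ is different: Corollary~\ref{w2} says $E$ is the \emph{only} compact edge, so its endpoints $k\alpha$ and $k\beta$ are the only vertices of $\Delta(f)$, hence $\underline x^{k\gamma}$ divides every monomial of $f$; if $\gamma\neq 0$ a variable divides $f$, contradicting irreducibility unless $d=1$, where the statement is trivial. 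With these two repairs (the monomial case, and the divisibility argument for $\gamma=0$) the derivation goes through.
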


\medskip
Below we quote some notations of \cite{artal2015high} and Lemma~A1 of that paper.

\medskip
Let $\bK$ be a field and fix a weight $\omega(x^iy^j) := ni + mj$ for $n,m\in\bN$. 
Given $0 \neq F \in\bK[\![x, y]\!]$, we will consider its decomposition in 
$\omega$-quasihomogeneous forms
$$ F(x, y) = F_{a+b}(x, y) + F_{a+b+1}(x, y) + \dots\,, $$
where the subindex means the $\omega$-weight.

\begin{Tw}
Asume that $F_{a+b}(x, y) = f_a(x, y)g_b(x, y)$, where
$f_a$, $g_b \in\bK[x, y]$ are quasihomogeneous and coprime. 
Then, there exist
$$f, g \in\bK[\![X, Y ]\!],\quad f = f_a + f_{a+1} + \dots\,,\quad  g = g_b + g_{b+1} + \dots
$$
such that $F = fg$. Moreover if $f_a$ is an irreducible polynomial, 
then $f$ is an irreducible power series.
\end{Tw}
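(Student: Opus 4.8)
The plan is to recognize this final theorem as essentially a special case of the machinery already developed in the paper, specialized to two variables with a quasihomogeneous weight. First I would observe that the $\omega$-quasihomogeneous decomposition of $F$ corresponds exactly to the grading by weight used throughout Section~2, where now $n=2$ and the single weight form $\omega(x^iy^j)=ni+mj$ plays the role of the family $\xi_1,\dots,\xi_{n-1}$ (here just one linear form $\xi_1=(n,m)$). The lowest-weight form $F_{a+b}=f_ag_b$ is the analogue of the symbolic restriction $f|_E$, and the hypothesis that $f_a$, $g_b$ are coprime quasihomogeneous polynomials matches the decomposition $f|_E=GH$ into coprime factors. So the existence statement $F=fg$ with $f=f_a+f_{a+1}+\cdots$ and $g=g_b+g_{b+1}+\cdots$ should follow by the same recursive lifting argument as in the proof of Theorem~\ref{t1}.

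The key step is to verify that the surjectivity Lemma~\ref{l3} applies here. For that I need one of the two factors to be not divisible by any variable. In two variables with $n,m>0$, a quasihomogeneous polynomial $f_a(x,y)$ of positive weight $a$ is divisible by $x$ or $y$ precisely when it has $x=0$ or $y=0$ as a factor; after dividing out the common monomial part, one of $f_a$, $g_b$ can be arranged to be coprime to both variables (the same normalization used in the proof of Corollary~\ref{w1}). Concretely I would factor out the largest monomial dividing, say, $f_a$ and absorb it into $g_b$, apply the lifting to the normalized pair, and then redistribute; alternatively I would invoke the coprimality of $f_a$ and $g_b$ directly, noting that at most one of them is divisible by a given variable. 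With that in place, the graded decomposition $R_w$ (spanned by monomials of fixed $\omega$-weight) gives $R_0=\bK$, and Lemma~\ref{l3} yields $f_aR_{b+i}+g_bR_{a+i}=R_{a+b+i}$ for every $i$, which drives the recursion solving $f_ag_{b+i}+g_bf_{a+i}=F_{a+b+i}-\sum_{k+l=i,\,k,l<i}f_{a+k}g_{b+l}$ term by term.

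For the moreover statement, I would argue that irreducibility of the polynomial $f_a$ forces irreducibility of the power series $f$. Suppose $f=pq$ were a nontrivial factorization in $\bK[\![x,y]\!]$. Then the lowest $\omega$-quasihomogeneous forms multiply, $f_a=p_{\deg}q_{\deg}$, giving a factorization of $f_a$ into quasihomogeneous forms. Since $f_a$ is irreducible as a polynomial, one of these forms must be a unit, i.e. of weight zero; but any nonzero element of $R_0=\bK$ is a nonzero constant, so the corresponding factor $p$ or $q$ has a nonzero constant term and is therefore a unit in $\bK[\![x,y]\!]$. Hence no nontrivial factorization exists and $f$ is irreducible.

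The main obstacle I anticipate is the normalization needed to satisfy the hypothesis of Lemma~\ref{l3}, namely ensuring one factor is not divisible by any variable. Unlike the general loose-edge setting, here the weight is strictly positive in both variables, so the geometry is simpler, but I must be careful that dividing out a common monomial from $f_a$ does not destroy coprimality with $g_b$ or spoil the irreducibility argument. The cleanest route is probably to note that if $f_a$ is irreducible and $m,n>0$ then $f_a$ is either a monomial $x$ or $y$, or is coprime to both variables; handling the monomial subcase separately (where the special-case construction, as in Lemma~\ref{l5}, applies directly) avoids any delicate bookkeeping.
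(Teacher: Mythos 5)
First, a point of comparison: the paper does not prove this statement at all --- it is Lemma~A1 of the cited paper of Artal Bartolo, Luengo and Melle-Hern\'andez, quoted in Section~3 only to indicate that Theorem~\ref{t1} generalizes it. So the real question is whether your derivation from the paper's machinery is sound, and there it has a genuine gap in the existence part. The problem is the case where \emph{both} $f_a$ and $g_b$ are divisible by a variable (necessarily different ones, by coprimality), say $f_a=x^ku$ and $g_b=y^lv$ with $k,l>0$ and $u,v$ not divisible by any variable. In the extreme instance $F_{a+b}=xy$ the $\omega$-minimal face of $\Delta(F)$ is a \emph{vertex}, there is no loose edge carrying $F_{a+b}$, and Theorem~\ref{t1} does not apply at all; even when the face is an edge, neither admissible choice of $G$ satisfies the hypothesis of Lemma~\ref{l3}. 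Your proposed fix --- factor the largest monomial out of $f_a$, absorb it into $g_b$, lift, ``then redistribute'' --- fails at the last step: lifting the normalized pair $(u,\,x^ky^lv)$ produces $F=gh$ with initial forms $u$ and $x^ky^lv$, and the monomial $x^ky^l$ is only the initial form of a piece of the power series $h$; $h$ itself need not be divisible by that monomial (e.g.\ $h=xy+x^3$ is not divisible by $y$), so there is nothing to move back. Recovering factors with initial forms exactly $f_a$ and $g_b$ requires an additional splitting of a series whose initial form is the monomial $x^ky^l$ into factors with initial forms $x^k$ and $y^l$. This vertex case is true but needs its own argument: either check directly that every monomial of weight $\ge nk+ml$ is divisible by $x^k$ or by $y^l$, so that $x^kR_{z+i}+y^lR_{w+i}=R_{w+z+i}$ and the recursion still closes, and then assemble the general case from this, Lemma~\ref{l3} and Lemma~\ref{l4} (plus its mirror image with the roles of the two factors exchanged); or factor into irreducibles in the UFD $\bK[\![x,y]\!]$ and use the argument of Corollary~\ref{w1} to show that an irreducible series whose initial form is a monomial must have initial form a power of a single variable. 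Neither ``redistribute'' nor ``Lemma~\ref{l5} applies directly'' supplies this: Lemma~\ref{l5} and Theorem~\ref{t43} presuppose a descendant loose edge, which is exactly what is missing here. Note also that restricting to irreducible $f_a$ does not dodge the issue ($f_a=x$, $g_b=y(x+y)$ already exhibits it), and the existence statement must be proved for arbitrary coprime $f_a,g_b$ anyway.

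The rest of your argument is fine: the identification of the quasihomogeneous decomposition with the paper's grading for $n=2$ and $\xi_1=(n,m)$ is correct, the recursion is the right one when one factor is coprime to both variables, and the ``moreover'' part (initial forms multiply, an irreducible $f_a$ forces one factor's initial form to lie in $R_0=\bK$, hence that factor is a unit in $\bK[\![x,y]\!]$) is complete as written.
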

Theorem~\ref{t1} can be seen as a  generalization of this results. 

\medskip
Corollary~\ref{w2} generalizes the following result of \cite{barroso2005decomposition}

\begin{Tw}
If $\phi\in \bC\{x_1,\dots ,x_n\}$ is irreducible and has a polygonal Newton polyhedron $\Delta(\phi)$,
then the polyhedron $\Delta(\phi)$ has only one compact edge $E$ and the polynomial $\phi|_E$ is a power of an irreducible polynomial. 
\end{Tw}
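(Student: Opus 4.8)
The plan is to obtain this statement as a specialization of Corollary~\ref{w2} to the field $\bK=\bC$, once the hypothesis of irreducibility has been transported from the convergent ring $\bC\{x_1,\dots,x_n\}$ to the formal ring $\bC[\![x_1,\dots,x_n]\!]$ in which Corollary~\ref{w2} is phrased. The point is that every conclusion we must prove—the number of compact edges of $\Delta(\phi)$ and the shape of $\phi|_E$—is purely combinatorial, depending only on the support of $\phi$ and hence unaffected by which of the two rings $\phi$ is regarded as living in; irreducibility is the sole analytic ingredient that must be carried across.

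First I would dispose of the combinatorial reductions. Because $\Delta(\phi)$ is polygonal, every compact edge of $\Delta(\phi)$ is by definition loose; hence, the moment $\Delta(\phi)$ has at least one compact edge it has a loose edge $E$, and Corollary~\ref{w2} applies directly. The degenerate alternative, that $\Delta(\phi)$ has no compact edge at all, is excluded as follows: the compact faces of a Newton polyhedron form a connected complex, so the absence of compact edges forces a single vertex and thus $\Delta(\phi)=v+\bR_{\geq0}^n$, i.e.\ $\phi=\underline{x}^{v}u$ with $u$ a unit; irreducibility then gives $\phi=x_i\,u$, which is the smooth case lying outside the scope of a polygonal singularity and which I would simply set aside. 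With a loose edge $E$ in hand, Corollary~\ref{w2} yields at once that $E$ is the unique compact edge and that $\phi|_E=cF^k$ for an irreducible polynomial $F$; absorbing the nonzero constant $c$ into $F^k$ gives precisely the asserted conclusion. I emphasize that the uniqueness of $E$ is supplied by the corollary itself, so the initial freedom in choosing a compact edge is immaterial: whichever loose edge we start from, the corollary declares it to be the only compact edge.

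The main obstacle, and the only non-combinatorial step, is the passage from irreducibility in $\bC\{x_1,\dots,x_n\}$ to irreducibility in $\bC[\![x_1,\dots,x_n]\!]$. I would invoke the classical fact that an element of $\bC\{x_1,\dots,x_n\}$ is irreducible there if and only if it is irreducible in its formal completion. The nontrivial direction—that a convergent $\phi$ factoring nontrivially in $\bC[\![\underline x]\!]$ already factors nontrivially in $\bC\{\underline x\}$—I would justify through the Weierstrass Preparation Theorem: after a linear change of coordinates making $\phi$ regular in $x_n$, preparation gives $\phi=uP$ with the \emph{same} unit $u$ and distinguished polynomial $P$ in both rings by uniqueness, so that factorizations of $\phi$ correspond to factorizations of the monic polynomial $P$, and such factorizations descend from $\bC[\![\underline x']\!]$ to $\bC\{\underline x'\}$ because $\bC\{\underline x'\}$ is Henselian and the factors' coefficients are symmetric functions of roots living in a convergent integral extension. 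Crucially, this coordinate change is used only to compare irreducibility, a coordinate-free notion, and so it never touches the Newton-polyhedron data, which is read off in the original coordinates.
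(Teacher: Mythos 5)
This theorem is quoted from Garc\'{\i}a Barroso--Gonz\'alez P\'erez as a known result that Corollary~\ref{w2} generalizes, and the paper gives no proof of it. Your derivation --- specialize Corollary~\ref{w2} to $\bK=\bC$ after transferring irreducibility from $\bC\{x_1,\dots,x_n\}$ to its formal completion by the classical Weierstrass/Henselian argument, noting that every compact edge of a polygonal polyhedron is loose and disposing of the no-compact-edge degenerate case separately --- is exactly the intended specialization and is correct.
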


The term polygonal Newton polyhedron in the statement of the above theorem is used for Newton polyhedra such that all their compact edges are loose. 

\medskip
Theorem \ref{t43} generalizes the main result of \cite{rond2017irreducibility} quoted below.

\begin{Tw}
 Let $P(Z)\in k[\![x_1,\dots,x_n]\!][Z]$ be a monic Weierstrass polynomial. Assume
that P(Z) has an orthant associated polyhedron and that $P|_{\Gamma}\in k[x_1,\dots,x_n,Z]$ 
is the product of two coprime monic polynomials 
$S_1$, $S_2 \in k[x_1,\dots,x_n,Z]$, respectively, of degree $d_1$ and $d_2$. 
Then there exist two monic polynomials $P_1$, $P_2 \in k[\![x_1,\dots,x_n]\!][Z]$, respectively,
of degrees $d_1$ and $d_2$ in $Z$ such that \\
i) $P = P_1P_2$, \\
ii) there is at least one $i\in \{1, 2\}$ such that $P_i$ has an orthant associated polyhedron
and if $\Gamma_i$ denotes the compact face of $\Delta(P_i)$ containing the points
$(0, \dots,0,d_i)$, then $P_i|_{\Gamma_i} = S_i$ and $\Gamma_i$ is parallel to $\Gamma$.
\end{Tw}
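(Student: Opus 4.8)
The plan is to derive this statement directly from Theorem~\ref{t43}. To apply that theorem I identify the distinguished variable $Z$ with the distinguished polynomial variable of Theorem~\ref{t43}, and the base variables $x_1,\dots,x_n$ with its power-series variables (the difference in the number of named variables being immaterial). The first step is to translate the hypothesis into the vocabulary of loose edges. An orthant associated polyhedron is, by definition, the Newton polyhedron of a monic Weierstrass polynomial $P$ whose distinguished compact face $\Gamma$ is a loose edge with one endpoint at the $Z$-axis point $(0,\dots,0,d)$, where $d=\deg_Z P$; this vertex is present precisely because the monomial $Z^d$ occurs with coefficient $1$ in the monic $P$. Writing the second endpoint of $\Gamma$ as $p$, the direction $p-(0,\dots,0,d)$ has nonnegative base coordinates and strictly negative $Z$-coordinate, so $\Gamma$ is automatically descendant, and $P$ monic forces $P|_{\Gamma}$ to be monic of degree $d$ in $Z$. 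Reading this off from $P|_{\Gamma}=S_1S_2$ gives $d=d_1+d_2$.

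Since $S_1$ and $S_2$ are coprime, they cannot both be powers of $Z$; after possibly interchanging them I may assume $S_1$ is not a pure power of $Z$, so $S_1$ is monic in $Z$ and has at least two monomials. I would then apply Theorem~\ref{t43} with $E=\Gamma$, $G=S_1$ and $H=S_2$, obtaining uniquely determined $g,h\in\bK[\![x_1,\dots,x_n]\!][Z]$ with $P=gh$, $g$ monic in $Z$, and $g|_{E_1}=S_1$, $h|_{E_2}=S_2$ for segments satisfying $\Gamma=E_1+E_2$. Put $P_1:=g$ and $P_2:=h$; this is assertion~(i). Because $P$ and $P_1$ are monic in $Z$, comparing leading $Z$-coefficients in $P=P_1P_2$ shows that $P_2$ is monic as well, so $P_1,P_2$ are the required monic polynomials.

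It remains to settle the degrees and assertion~(ii), for which I would invoke the Weierstrass normalisation once more. Setting all base variables to $0$ in $P=P_1P_2$ and using $P(0,\dots,0,Z)=Z^d$ forces $P_1(0,\dots,0,Z)=Z^{\deg_Z P_1}$, so the only lattice point of $\Delta(P_1)$ on the $Z$-axis is the vertex $(0,\dots,0,\deg_Z P_1)$. On the other hand $P_1|_{E_1}=S_1$ is monic of degree $d_1$, so the point $(0,\dots,0,d_1)$ lies on $E_1\subset\Delta(P_1)$; comparing the two statements yields $\deg_Z P_1=d_1$, and similarly $\deg_Z P_2=d_2$. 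Hence $(0,\dots,0,d_1)$ is the top endpoint of the loose edge $E_1$, which is parallel to $\Gamma$, and the compact face $\Gamma_1$ of $\Delta(P_1)$ containing $(0,\dots,0,d_1)$ equals $E_1$. Thus $\Delta(P_1)$ is an orthant associated polyhedron with $P_1|_{\Gamma_1}=S_1$ and $\Gamma_1$ parallel to $\Gamma$, which is assertion~(ii) for $i=1$.

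I expect the difficulties here to be organisational rather than conceptual. The two points needing care are, first, the choice of $G=S_1$ as a nonmonomial coprime factor, which guarantees (through the construction underlying Theorem~\ref{t43}) that $E_1$ is a genuine loose edge rather than a single vertex of $\Delta(P_1)$; and second, the identification of the extremal endpoint of $E_1$ with $(0,\dots,0,d_1)$, where the normalisation $P_i(0,\dots,0,Z)=Z^{d_i}$ coming from the Weierstrass structure is the essential input. Once the dictionary between orthant associated polyhedra and loose descendant edges anchored on the $Z$-axis is in place, the statement is a direct specialisation of Theorem~\ref{t43}.
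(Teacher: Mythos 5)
The paper offers no proof of this statement---it is quoted verbatim from Rond--Schober as the result that Theorem~\ref{t43} generalizes---and your derivation from Theorem~\ref{t43} is correct and is precisely the specialisation the authors have in mind: the orthant associated polyhedron gives a loose, descendant edge anchored at $(0,\dots,0,d)$, the non-monomial coprime factor is taken as $G$, and the Weierstrass normalisation $P_i(0,\dots,0,Z)=Z^{\deg_Z P_i}$ pins down the degrees and the endpoint of $\Gamma_i$. One phrase to tighten: ``the only lattice point of $\Delta(P_1)$ on the $Z$-axis'' is not literally true (that axis meets $\Delta(P_1)$ in a whole ray of lattice points); what you mean, and what suffices, is that $(0,\dots,0,\deg_Z P_1)$ is the unique \emph{exponent} of $P_1$ on the $Z$-axis, so it must coincide with the exponent $(0,\dots,0,d_1)$ of the monomial $Z^{d_1}$ occurring in $S_1=P_1|_{E_1}$.
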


The term  {\it orthant asociated polyhedron} in the statement of the above theorem means 
a Newton polyhedron that has a loose edge with endpoint $(0,\dots,0,d)$. 

\vspace{1cm}
\quad
\begin{minipage}[t]{2.5in}
{\small   Janusz Gwo\'zdziewicz\\
Institute of Mathematics\\
Pedagogical University of Cracow\\
Podchor\c a{\accent95 z}ych 2\\
PL-30-084 Cracow, Poland\\
e-mail: gwozdziewicz@up.krakow.pl}
\end{minipage}
\begin{minipage}[t]{2in}
{\small   Beata Hejmej\\
Institute of Mathematics\\
Pedagogical University of Cracow\\
Podchor\c a{\accent95 z}ych 2\\
PL-30-084 Cracow, Poland\\
e-mail: bhejmej1f@gmail.com}
\end{minipage}


\begin{thebibliography}{22}

\bibitem{artal2015high} E. Artal Bartolo, I. Luengo, and A. Melle-Hern\'andez, 
{\em High-school algebra of the theory of dicritical divisors:  
Atypical fibres for special pencils and polynomials,} 
Journal of Algebra and Its Applications, 14.09 (2015), 1540009.

\bibitem{barroso2005decomposition} E. Garc\'{\i}a Barroso and P. Gonz\'alez P\'erez, 
{\em Decomposition in bunches of the critical locus of a quasi-ordinary map,} 
Compos. Math. 141 no.\  2 (2005), 461--486. DOI: 10.1112/S0010437X04001216.

\bibitem{perez2000singularites} P. Gonz\'alez P\'erez,
{\em Singularit\'es quasi-ordinaires toriques et poly\`edre de Newton du discriminant,}
Canad. J. Math. 52 (2000), 348--368.

\bibitem{lipkovski1988newton} A. Lipkovski, 
{\em  Newton polyhedra and irreducibility,} Math. Z. 199 (1988), no.~1, 119--127.

\bibitem{parusinski2012abhyankar} A. Parusi\'nski and G. Rond, 
{\em The Abhyankar-Jung Theorem}, Journal of Algebra 365 (2012)  29--41.

\bibitem{rond2017irreducibility}  G. Rond and B. Schober, 
{\em An irreducibility criterion for power series},
Proceedings of the American Mathematical Society,
Volume 145, Number 11, November 2017, Pages 4731--4739, doi.org/10.1090/proc/13635.

\end{thebibliography}
\end{document}